\DeclareSymbolFont{calletters}{OMS}{cmsy}{m}{n}
\DeclareSymbolFontAlphabet{\mathcal}{calletters}
\DeclareMathAlphabet{\mathpzc}{OT1}{pzc}{m}{it}
\def\be{\begin{eqnarray}}
\def\ee{\end{eqnarray}}
\def\b*{\begin{eqnarray*}}
\def\e*{\end{eqnarray*}}
\newtheorem{Theorem}{Theorem}[part]
\newtheorem{Definition}{Definition}[part]
\newtheorem{Proposition}{Proposition}[part]
\newtheorem{Assumption}{Assumption}[part]
\newtheorem*{Assumptionn}{Assumption}
\newtheorem{Lemma}{Lemma}[part]
\newtheorem{Remark}{Remark}[part]
\makeatletter \@addtoreset{equation}{section}
\newcommand{\No}[1]{\left\|#1\right\|}     
\def \R{\mathbb{R}}
 \title{Moral hazard in welfare economics: on the advantage of Planner's advices to manage employees' actions.\footnote{This work is supported by the ANR project Pacman, ANR-16-CE05-0027 and the Chair Financial
Risks (Risk Foundation, sponsored by Société Générale).}}
 \author{By Thibaut Mastrolia\footnote{CMAP--\'Ecole Polytechnique, Route de Saclay 91128 Palaiseau, FRANCE, \texttt{thibaut.mastrolia@polytechnique.edu}} }
             \date{\today}
\begin{document}

 \maketitle
 
 \noindent{\footnotesize \textbf{Abstract: }\noindent In this paper, we study moral hazard problems in contract theory by adding an exogenous Planner to manage the actions of many Agents hired by a Principal. We provide conditions ensuring that Pareto optima exist for the Agents using the scalarization method associated with the multi-objective optimization problem and we solve the problem of the Principal by finding optimal remunerations given to the Agents. We illustrate our study with a linear-quadratic model by comparing the results obtained when we add a Planner in the Principal/multi-Agents problem with the results obtained in the classical second-best case. More particularly in this example, we give necessary and sufficient conditions ensuring that Pareto optima are Nash equilibria and we prove that the Principal takes the benefit of the action of the Planner in some cases.}
\vspace{1em}

\noindent{\bf Key words:} Moral hazard, Nash equilibrium, Pareto efficiency, multi-objective optimization problems, BSDEs. 
\vspace{5mm}

\noindent{\bf AMS 2000 subject classifications:} Primary: 91A06, 91B40. Secondary: 91B15, 91B10, 91B69, 93E20.
\vspace{0.5em}

\noindent{\bf JEL subject classifications:} 	C61, C73, D60, D82, D86, J33, O21.

 \section{Introduction}
 In 1992, Maastricht treaty, considered as the key stage in the European Union construction, proposed the establishment of a single currency for its members to ensure a stability of prices inside the EU.  This date marks the birth of the euro, as the common currency of the eurozone, with the creation of the European Institut Monetary to ensure the introduction of it, which has operated until the creation in 1998 of a central bank for Europe, the European Central Bank. The ECB is the decision-making center of the Eurosystem, which is one of the main component of the EU (see for instance \cite[Chapter 5, Section 2.5]{scheller2004european} for a description of its general role inside the EU) to ensure the economical smooth functioning of the eurozone, and Maastricht Treaty contains some criteria that member states are supposed to respect. For instance, the Article 104-C stipulates that "Member States shall avoid excessive government deficits" and some criteria to respect it concern the Government budget balance, which does not have to exceed 3\% of the GDP and the debt-to-GDP ratio, which does not have to exceed 60\%. If these benchmarks are exceeded, through the Stability and Growth Pact, members of eurozone have accepted to follow a precise procedure each year to reduce their debt. Even before the financial crisis of 2007-2008, these rules have been discussed (see \cite{peet1998maastricht}), and since the Greek government-debt crisis this controversy has been amplified.\footnote{See for instance (see \cite{libe,figaro,telegraph,laufer2008maastricht}).} The fact is that the eurozone is very heterogeneous in the respect of Maastricht criteria as showed in Table \ref{table:dataUE}.
 \begin{table}[H]\label{table:dataUE}\caption{\small \textit{Governement deficit/surplus in \% of GDP and debt-to-GDP ratio in \% for nine EU members in 2015. Data from Eurostat.}}\vspace{0.5em}
 
 \begin{tabular}{|l|l|l|}\hline
EU members & gov. deficit/surplus in \% of GDP& debt-to-GDP ratio in \%\\
&&\\ \hline
Estonia&+0.1&10.1\\
Finland&-2.8&63.6\\
France&-3.5&96.2\\
Germany&+0.7&71.2\\
Greece&-7.5&177.4\\
Ireland&-1.9&78.6\\
Italie&-2.6&132.3\\
Lithuania&-0.2&38.7\\
Spain&-5.1&99.8\\
 \hline Euro area (19 countries)&-2.1&90.4\\
 \hline
\end{tabular} 
 \end{table}
 The natural questions arising are the following:
 \begin{itemize}
 \item[a.] How the ECB can provide incentives to EU members to respect the rules induced by Maastricht treaty? Which kind of procedure is the more efficient?
 \item[b.] How EU members have to interact for the welfare of the European Union? 
 \end{itemize}
 
\noindent The Greek government crisis has impacted all the eurozone and showed that members are strongly correlated each others. Through this example we see that one difficulty of both the ECB and eurozone members is to reach an optimal decision and equilibria to succeed in the global european construction.\vspace{0.5em}

\noindent The example above, and more specially the problem a., describes the kind of investigations made in the incentives theory starting in the 70's (see among others \cite{mirrlees1974notes,mirrlees1976optimal}) and is an illustration of a Principal/Agent problem. More exactly, the classical framework considered is the following: a Principal (\textit{she}) aims at proposing to an Agent (\textit{he}) a contract. The Agent can accept or reject the contract proposed by the Principal and under acceptance conditions, he will provide a work to manage the wealth of the Principal. However, the Principal is potentially imperfectly informed about the actions of the Agent, by observing only the result of his work without any direct access on it. The Principal thus designs a contract which maximizes her own utility by considering this asymmetry of information, given that the utility of the Agent is held to a given level (his reservation utility). From a game theory point of view, this class of problems can be identified with a Stackelberg game between the Principal and the Agents, \textit{i.e.} the Principal anticipates the best-reaction effort of the Agent and takes it into account to maximize her utility. Moral hazard in contracting theory, \textit{i.e} the Principal has no access on the work of her Agent, has been developed during the 80's and was investigated in a particular continuous time framework by Holmstr\"om and Milgrom in \cite{holmstrom1987aggregation}. We refer to the monographies of Laffont and Martimort \cite{laffont2001incentive}, Laffont and Tirole \cite{laffont1996incentives}, Sung \cite{sung2001lectures} and Cvitanic and Zhang \cite{cvitanic2012contract} for nice reviews of the literature in this topic and different situations studied dealing with moral hazard in Principal/Agent problems. More recently, the noticeable work of Sannikov \cite{sannikov2008continuous} investigates a stopping time problem in contract theory by emphasizing the fundamental impact of the value function of the Agent's problem to solve the problem of the Principal. This was then mathematically formalized with the works of Cvitanic, Possama\"i and Touzi in \cite{cvitanic2014moral,cvitanic2017dynamic} by proposing a nice handleable procedure to solve a large panel of problems in moral hazard. More exactly, they have showed that the Stackelberg equilibrium between the Agent and the Principal may be reduced to two steps in the studying of general problems in contracts theory. First, given a fixed contract proposes by the Principal, the Agent aims at maximizing his utility by finding the best reaction effort associated with the proposed contract. It is well-known, since the works of Rouge and El Karoui \cite{rouge2000pricing} and Hu, Imkeller and M\"uller \cite{hu2005utility} that a utility maximization problem can be reduced to solve Backward Stochastic Differential Equations (BSDE for short, we refer to the works of Pardoux and Peng \cite{pardoux1990adapted,pardoux1992backward} and El Karoui, Peng and Quenez \cite{el1997backward} for general results related to this theory), and Cvitanic, Possama\"i and Touzi have proved that more generally, when the Agent can control the volatility, the problem can be reduced to solve a second order BSDE (see for instance the seminal work \cite{soner2012wellposedness} and the extension of it with more general conditions \cite{possamai2015stochastic}). Then, it is proved in \cite{cvitanic2014moral,cvitanic2017dynamic} that the problem of the Principal can be reduced to solve a stochastic control problem with state variables of the problem the output and the value function of the Agent, by using the HJB equations associated with it and verification theorems.\vspace{0.3em}

\noindent An extension of moral hazard with a Principal and an Agent, which echoes the example of the ECB as the Principal and the EU members as the Agents presented above, consists in considering a Principal dealing with many Agents. Principal/Multi-Agents problems have been investigated in a one period framework by H\"olmstrom \cite{holmstrom1982moral} (among others) and then extended in the continuous time model by Koo, Shim and Sung in \cite{keun2008optimal} and more recently by Elie and Possama\"i in \cite{elie2016contracting}. In the latter, Elie and Possama\"i consider exponential utility functions for the Agents and the Principal and they assume that the Agents are rational so that the first step of the procedure to solve the Agents' problems remains to find Nash equilibria, which can be reduced to solve a multi-dimensional quadratic BSDE, as explained in \cite{elie2016contracting}. Nevertheless, by recalling the example of the ECB and EU members above with question b., we can also consider an other type of interactions between the Agents.
\vspace{0.5em}

 \noindent In microeconomics, we can distinguish two type of interactions between connected agents. Agents can be considered as rational economical entities and aim at finding their best reaction functions to maximize their wealths/minimize their costs, in view of the actions of the others. This investigation consists in finding a Nash equilibrium and fits with a situation in which the (non-cooperative) agents cannot deviate from this equilibrium. However, as explained with the so-called prisoner's dilemma (see Table 2 for more explanation), the configuration obtained is not necessarily an optimal choice for the welfare of the system.

\begin{table}[H]\caption{\small \textit{This example was emphasized by Merrill Flood and Melvin Dresher in the 50's and then formalized by Albert W. Tucker. Considers two prisoners P1 and P2 waiting for their interrogation to determine if they are indeed guilty or not. The prisoners have two choices, they can keep silent or they can denounce the other prisoner. We sum up the payoffs-vector associated with this situation in the table below, the first (resp. second) component of the vector is the payoff of P1 (resp. P2). We see that Pareto optima are keep silent-keep silent, denounce-keep silent and keep silent-denounce and the Nash equilibrium is denounce-denounce, which is the dominant strategy. In particular, if the two prisoners are forced to play a Pareto optimum, their global payoff is $0$ or $-9$ which is always better than the natural equilibrium with global payoff $-10$.}}\vspace{1em}
 
\centering \begin{tabular}{|l|l|l|}\hline
\backslashbox{Action of P2}{Action of P1} & keep silent & denounce\\
 \hline keep silent & $(0,0)$& $(1,-10)$\\
 \hline denounce & $(-10,1)$ &$(-5,-5)$\\
 \hline
\end{tabular} 
 \end{table} 
 
\noindent Welfare economics is a part of microeconomics which aims at determining the degree of well-being of a situation and Pareto optima are considered as one criterion to measure it. Indeed, a Pareto optimum consists in finding a configuration in which all the considered entities cannot deviate without harming the state of an other entity. Unlike a Nash equilibrium, a Pareto optimum is not reached using dominant strategies but has to be imposed in a general situation, since rational entities will reasonably converge to a Nash equilibrium. Finding Pareto optima gives a lot of information in terms of general equilibrium inside a system of markets and supply/demand systems, as showed by (among others) Walras in \cite{walras1896elements} to explain price-setting mechanisms, and mathematically formulated by Arrow and Debreu in \cite{arrow1954existence} with the celebrated two fundamental theorems of welfare economics. Roughly speaking, the first fundamental theorem of welfare economics states that any general equilibrium in a competitive market is (weakly) Pareto optimal. The second states that any Pareto optimum can lead to a general equilibrium by reallocating initial appropriations. We refer to \cite{mirrlees1974notes, arnott1991welfare} and the monography \cite{mas1995microeconomic}  for more details on it. It is however\footnote{We for instance refer to \cite{Acemoglu} for an investigation of this kind of issues.} not clear that with information asymmetry this equivalence is always true (specially the second fundamental theorem does not hold), but studying the existence of Pareto optima seems to be interesting to have relevant information related to general equilibrium in view of the first theorem. Moral hazard problems dealing with Pareto optimality was studied in a one-period model in \cite{prescott1984pareto} and in a two-period model in \cite{panaccione2007pareto}. As far as we know, it does not have been investigated in continuous time models and the present paper is the first who considers it. Before going further, let us explain how we have to understand the Principal/multi-Agents problem studied in this work. Since the actions of Agents lead naturally to a Nash equilibrium (which does not coincides \textit{a priori} with a Pareto optimum), we have to assume that the Agents cannot manage their work themselves or are forced to follow a precise strategy. This induces to introduce a third player in the Principal/multi-Agents game, namely \textit{the Planner}, who imposes an effort to the Agents for their well-being. More specifically, the Planner can be seen as a mediator inside a firm who managed the actions of agents or a regulator who forces the Agents to act for the global interest. The Planer can be for instance a Government who imposes some Labour laws, by thinking about the global interest of the employees or any other entity who manages actions of Agents hired by a Principal. In this paper, we will distinguish the case where no Planner impacts the Stackelberg equilibrium, which coincides with the second best case in contract theory and where rational Agents reach to Nash equilibria, to the case in which a Planner manages the work of the Agents. The structure of the paper is the following:
\vspace{0.3em}

\noindent After having described the Economy studied in Section \ref{section:economy}, we extend the result of \cite{elie2016contracting} to general separable utilities for the Agents, and general utilities for the Principal in Section \ref{section:noplanner} for the classical second-best case, named the \textit{no-Planner model}. We provide conditions on the data ensuring that the multi-dimensional BSDE associated with Nash equilibria for the Agents is well-posed as an application of the recent results in \cite{harter2016stability} (see Appendix \ref{appendix:multidimBSDE} and Remark \ref{remark:qgbsde}). Then, in Section \ref{section:MOOP} we turn to the model in which a Planner manages the action of Agents. Using the fact that finding Pareto optima can be reduced to solve a Multi-Objective Optimization Problem (MOOP for short), we give general forms of some Pareto optima through the solutions of BSDEs and we solve the problem of the Principal using the HJB equation associated with it. Finally, in Section \ref{section:application}, we apply our results to a linear-quadratic model similar to the applied model studied in \cite{elie2016contracting} with two Agents having appetence coefficients and we give some interpretations on the relevant results. We compare the no-Planner model with models in which a Planner intervenes in the Principal/multi-Agents problem by providing sufficient and necessary conditions such that a Nash equilibrium is Pareto efficient, and by comparing the value functions of the Principal. \vspace{0.5em}

\noindent The notations and all the technical details are postponed to Appendix \ref{appendix:model} and \ref{appendix:multidimBSDE} and proofs of Section \ref{section:MOOP} are postponed to Appendix \ref{appendix:proof} to allege the reading of the paper. 
\section{The Economy}\label{section:economy}

We consider a finite number $N\geq 1$ of Agents hired by one Principal. Each Agent receives from the Principal a salary at a fixed horizon $T>0$ to manage the project of the Principal described by an $\mathbb R^N-$valued process $X$ with volatility an $\mathcal M_{N}(\mathbb R)-$valued map denoted by $\Sigma$ depending on the output $X$, given by
\begin{equation}\label{dynamic:output}
X_t:= \int_0^t \Sigma(s,X_s) dW_s,\; t\in [0,T],\, \mathbb P-a.s., 
\end{equation}
where $W$ is an $N-$dimensional Brownian motion defined on some probability space $(\Omega,\mathcal F,\mathbb P)$ filtred with the natural filtration of the Brownian motion $\mathbb F:=(\mathcal F_t)_{t\in[0,T]}$. We assume in this paper that the volatility $\Sigma$ is uniformly bounded, the map $(t,x)\longmapsto \Sigma(t,X_t)$ is $\mathbb F-$progressively measurable, invertible and such that \eqref{dynamic:output} has a unique strong solution with exponential moments.\vspace{0.3em}

\begin{Remark}
The boundedness of $\Sigma$ is here for the sake of simplicity, while the invertibility is fundamental to define properly the weak formulation of the problem as explained in \cite{elie2016contracting}.
\end{Remark}

\subsection{Impact of the actions of $N-$interacting agent}
Let $A$ be a subset of $\mathbb R$. We assume that any agent can impact, with his effort, both the value of his assigned project and the values of the other projects. More explicitly, any $i$th Agent has an impact on the $i$th component on $X$ and on the other components managed by the other Agents. We represent the general action of the $N$ Agents by a matrix $a\in \mathcal M_N(A)$ where $a^{j,i}$ is the action of the $i$th Agent on the project $j$. Thus, $a^{:,i}$ is the $A^N-$valued column vector of the $i$th Agent's actions. This matrix impacts\footnote{See Appendix \ref{appendix:model} for mathematical details.} the dynamic \eqref{dynamic:output} of $X$ by adding a drift $b$ defined as a map from $[0,T]\times \Omega\times \mathcal M_{N}(A)$ such that for any time $t$, any space variable $x\in \mathbb R^N$ and any effort $a\in \mathcal M_N(A)$ of the Agents, we have
$$b(t,x,a):=\left(b^i\left(t,x,(a^{i,1}, \dots, a^{i,N})^\top\right)\right)_{1\leq i\leq N}. $$

\noindent For technical reasons, we have to focus on \textit{admissible} actions of the Agents as a subspace $\mathcal A$ of $\mathcal M_N(A)-$valued process for which the model is well-posed. We refer to Appendix \ref{appendix:model} for a mathematical definition of it. Thus, under technical conditions given in the appendix, the dynamic of $X$, impacted by any admissible actions $a$ of Agents, is given by
\begin{equation}\label{dynamicX}
X_t= \int_0^t b(s,X_s, a_s) ds+\int_0^t \Sigma(s,X_s) dW_s^a,
\end{equation}
where 
$$W^a:= W-\int_0^T \Sigma(s,X_s)^{-1} b(s,X_s,a_s) ds,$$ is a Brownian motion under some probability measure $\mathbb P^a$ equivalent to $\mathbb P$. We thus work under the weak formulation of the problem (see \cite[Sections 2.1 and 2.2]{elie2016contracting} for more details on it).
\paragraph{Interpretation of \eqref{dynamicX}.} We first focus on the dependance with respect to $X$ in both the volatility $\Sigma$ and the drift $b$ of the project. This classical phenomenon expresses some friction effects between the different projects managed by the Agents so that each component of the general outcome $X$ can depend on the values of the others. Turn now to the dependance with respect to the effort of the Agents. The dynamic of the project $i$ depends on all the efforts provided by the $N$ Agents and assuming that the $i$th Agent has preferences depending on the $i$th component of $X$, this model emphasizes exactly an interacting effect between Agents since each Agent can impact positively or negatively the project managed by an other.

\subsection{Stackelberg equilibrium, Nash equilibrium and Pareto optimality}

It is well known that a moral hazard problem can be identified with a Stackelberg equilibrium between the Principal and the Agents. In the considered system, the first step consists in finding a best respond effort provided by the Agents given a sequence of salaries proposed by the Principal. In the second step, taking into account the reactions of the Agents, the Principal aims at maximizing her own utility.\vspace{0.3em}

\noindent We assume that each Agent is penalized through a cost function depending on his effort. More precisely, we denote by $k^i:[0,T]\times \mathbb R^N\times A^N\longrightarrow \mathbb R$ the cost function associated with the $i$th Agent.\vspace{0.3em}

\noindent For any $i\in \{1,\dots, N \}$, we set $U_0^i:\mathbb R \times \mathcal M_N(A)\longrightarrow \mathbb R$ the utility of the $i$th Agent. We will consider in this paper separable utilities given a sequence $(\xi^i)_{1\leq i\leq N}$ of salaries given by the Principal and an admissible action $a$ chosen by the $N$-Agents, such that 
$$U_0^i(\xi^i,a):=\mathbb E^{\mathbb P^a}\left[U_A^i(\xi^i)+\Gamma_i(X_T) - \int_0^T k^i(t,X_t, a^{:,i}) dt \right],$$ where $U_A^i:\R\longrightarrow\mathbb R$ is a concave and increasing map so that its inverse $U_A^{(-1)}$ is well-defined and increasing, $\Gamma_i:\mathbb R^N\longrightarrow \mathbb R$ is the appetence function of Agent $i$ to manage project $i$ which satisfies a technical, but not too restrictive, assumption (see Assumption $\textbf{(G)}$ in Appendix \ref{appendix:model}). For instance, $\Gamma_i(x)=\gamma_i\left( x^i-\frac{1}{N-1}\sum_{j\neq i} x^j\right)$, where $\gamma_i$ is the appetence of Agent $i$ toward the project $i$ compared to the other projects.\vspace{0.3em}

\noindent We recall that any Agent can accept or reject the contract proposed by the Principal through a reservation utility constrain. More exactly, we denote by $\mathcal C$ a general set of \textit{admissible}\footnote{See Appendix \ref{appendix:model} for mathematical details with the definition of $\mathcal C$.} contracts proposed by the Principal to the Agents, which remains to put technical assumptions ensuring that all the mathematical objects used in this paper are well-defined and such that for any panel of contracts $\xi:=(\xi^i)_{1\leq i\leq N}$ the following reservation utility constrains are satisfied
\begin{equation}\label{reservation:constrain}\sup_{a\in\mathcal A} U_0^i(\xi^i,a)\geq R_0^i, \; 1\leq i\leq N,\end{equation}
with a reservation utility $R_0^i\in \mathbb R$ for the $i$th Agent. We denote $R_0:=(R_0^i)_{1\leq i\leq N}$ the vector of reservation utilities.\vspace{0.3em}

\noindent  An interesting problem concerns the intrinsic type of best reaction efforts provided by the Agents. Two different approaches can be investigated.\vspace{0.3em}

\noindent One can assume that each Agent provides an optimal effort in view of the actions of the others. In this case, each Agent aims at finding a best reaction effort given both a salary proposed by the Principal and other Agents efforts. This situation typically fits with a non-cooperative game between Agents which reach to find stable equilibria of type Nash equilibria. This problem was well investigated in \cite{elie2016contracting} by proving that Agents play an equilibrium in view of performance of others players. Mathematically\footnote{See paragraphs Notations and General model and definitions in Appendix \ref{appendix:model} for the definitions of the operator $\otimes$ and the space of admissible best reaction $\mathcal A^i$ with \eqref{def:calAi}.}, we define a Nash equilibrium for the Agents by the following \vspace{0.5em}

\begin{Definition}[Nash equilibrium] Given an admissible contract $\xi\in \mathcal C$, a Nash equilibrium for the $N$ Agents is an admissible effort $e^\star(\xi)\in \mathcal A$ such that for any $i\in\{1,\dots,N\}$ \begin{equation}\label{nash:def}\sup_{e\in \mathcal A^i((e^\star(\xi))^{:,-i})} \, U_0^i(\xi^i, a\otimes_i (e^\star(\xi))^{:,-i}) = U_0^i(\xi^i, (e^\star(\xi))^{:,i}\otimes_i (e^\star(\xi))^{:,-i}).\end{equation}
\end{Definition}

\noindent The main problem of Nash equilibrium is that it is not optimal in general, from a welfare economics point of view, of interacting entities (see the prisoner's dilema in Table 2). It is why in this paper we will mainly focus on Pareto optima, mathematically defined by  \vspace{0.5em}
\begin{Definition}[Pareto optimum]
An admissible action $a^*\in \mathcal A$ is Pareto optimal if it does not exist $a\in \mathcal A$ such that $U_0^i(\xi,a^*) \leq U_0^i(\xi,a)$ for any $i=1,\dots, N$ and $U_0^i(\xi,a^*)<U_0^i(\xi,a)$ for at least one index $i$. 
\end{Definition}

\noindent The studied model coincides with a Principal/multi-Agents problem in which an exogenous entity, called the Planner, forces the Agents to act for the global interest. Given an optimal remuneration, the Planner aims at maximizing the global utility of the $N$-Agents by finding a Pareto equilibrium. Then, taking the best reaction effort of the Agents into account, the Principal chooses among the set of contracts those who maximize her utility.\vspace{0.3em}

\section{Some reminders on the no-Planner model}\label{section:noplanner}
We assume that no Planner intervenes in the Stackelberg games between Agents and the Principal and we extend merely the main results in \cite{elie2016contracting} to the case of separable utilities and we omit the proof since they follow exactly the same lines that \cite{elie2016contracting}. In that case, we consider rational Agents reaching a Nash equilibrium given an admissible sequence of contracts $(\xi^i)_{1\leq i\leq N}$, which corresponds to the classical second-best case. Intuitively, we begin to set the paradigm of any $i$th Agent. Given a salary $\xi^i$ given by the Principal and an effort $a^{:,-i}$ provided by the others, Agent $i$ aims at solving
\begin{equation}\label{pb:agenti}
U_0^i(\xi,a^{:,-i}):=\sup_{a\in \mathcal A^i(a^{:,-i})}\, \mathbb E^{\mathbb P^{a\otimes_i a^{:,-i}}} \left[U_A^i(\xi^i)+\Gamma_i(X_T) - \int_0^T k^i(t,X_t, a^{:,i}) dt \right].
\end{equation} 
Following the same computations than those in \cite{elie2016contracting}, by using martingale representation theorems, Problem \eqref{pb:agenti} remains to solve
\begin{equation}\label{bsde:i:intuition}
\mathcal Y_t^i=U_A^i(\xi^i)+\Gamma_i(X_T)+\int_t^T \sup_{a \in \mathcal A^i(a^{:,-i})}\left\{b(s,X_s, a\otimes_i a^{:,-i})\cdot \mathcal Z_s^i- k^i(s,X_s,a) \right\}ds-\int_t^T \mathcal Z_s^i\cdot \Sigma_sdW_s,
\end{equation}
and by denoting $a_{NA}^\star$ the maximizer of the generator of this BSDE, we deduce that the $A^N$-valued process $a_{NA}^\star(\cdot, X_\cdot, \mathcal Z_\cdot^i, a^{:,-i})$ is the best reaction effort of Agent $i$ given a salary $\xi^i$ and effort of the other players $a^{:,-i}$, where $(\mathcal Y^i, \mathcal Z^i)$ is the unique solution to BSDE \eqref{bsde:i:intuition}, under technical assumption stated in Appendix \ref{appendix:model}. \vspace{0.5em}

\subsection{Nash equilibrium and multidimensional BSDE}

Now, each Agent computes his best reaction effort at the same time. We thus have to assume that for any $(t,z,x)\in [0,T]\times \mathcal M_N(\mathbb R)\times \mathbb R^N$ there exists a fixed point $a_{NA}^\star(t,z,x)\in \mathcal M_N(A)$ inspiring by the best reaction effort of Agent $i$ made before. We consider that the following assumption holds in the following
\begin{Assumption} For any $(t,z,x)\in [0,T]\times \mathcal M_N(\mathbb R)\times \mathbb R^N$, there exists $a_{NA}^\star(t,z,x)\in\mathcal M_N(A)$ such that
$$(a^\star_{NA})^{:,i}(t,z,x) \in \underset{a\in A^N}{\text{ arg max }} \left\{\sum_{j=1}^N b^j\left(t,x, (a\otimes_i (a^\star_{NA})^{:,-i}(t,z,x))^{j,:} \right)z^{j,i}- k^i(t,x,a) \right\}.$$ 
We denote by $\mathcal A^\star_{NA}(t,z,x)$ the set of fixed point $a_{NA}^\star(t,z,x)$.
\end{Assumption}
\noindent As emphasized by \cite{elie2016contracting}, finding a Nash equilibrium for any sequence of salaries $(\xi^i)_{1\leq i\leq N}$ is strongly linked to find a solution to the following multidimensional BSDE 

\begin{equation}\label{edsr:max:nash}
\mathcal Y_t=(U_A^i(\xi^i))_{1\leq i\leq N}+ (\Gamma_i(X_T))_{1\leq i\leq N}+\int_t^T f^\star_{NA}(s,X_s,\mathcal Z_s)ds-\int_t^T \mathcal Z_s^\top \Sigma_sdW_s,
\end{equation}
with 
$$f^{\star,i}_{NA}(t,x,z):= \sum_{j=1}^N b^j(t,x, (a^\star_{NA})^{j,:})z^{ji}- k^i(t,x,(a^\star_{NA})^{:,i}),\; 1\leq i\leq N. $$
We recall the following definition of a solution to BSDE \eqref{edsr:max:nash} \vspace{0.5em}

\begin{Definition}\label{def:sol:multidimBSDE} A solution of BSDE \eqref{edsr:max:nash} is a pair $(Y,Z)\in \mathcal S^\infty(\mathbb R^N)\times (\mathcal S^\infty(\mathbb R^N)\times \mathcal H^m_{\text{BMO}}(\mathbb R^N))$ for some $m>1$ satisfying Relation \eqref{edsr:max:nash}.
\end{Definition}

\noindent Similarly to Theorem 4.1 in \cite{elie2016contracting}, we can now state that there is a one to one correspondance between a Nash equilibrium and a solution to BSDE \eqref{edsr:max:nash}. The proof of this result follows exactly the same lines that the proof of Theorem 4.1 in \cite{elie2016contracting}.\vspace{0.5em}

\begin{Theorem}\label{thm:multiqgbsde}
There exists a Nash equilibrium $e^\star(\xi)$ if and only if there is a unique solution of BSDE \eqref{edsr:max:nash} in the sense of Definition \ref{def:sol:multidimBSDE}. In this case, we have
$e_t^\star(\xi)\in \mathcal A^\star_{NA}(t,X_t,\mathcal Z_t)$ and conversely, any $a_{NA}\in \mathcal A^\star_{NA}(t,X_t,\mathcal Z_t)$ is a Nash equilibrium.
\end{Theorem}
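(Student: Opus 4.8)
\textbf{Overall strategy.} The plan is to follow the two-step scheme of \cite[Theorem~4.1]{elie2016contracting}, exploiting the separable structure of the $U_0^i$ to reduce, for each Agent, the best-reaction problem \eqref{pb:agenti} to the scalar quadratic BSDE \eqref{bsde:i:intuition}, and then to glue the $N$ scalar problems, via the fixed-point property of the maximizer $a^\star_{NA}$ assumed above, into the single multidimensional BSDE \eqref{edsr:max:nash}. Concretely I would argue the two implications separately, checking at each stage that all the admissibility and integrability requirements (progressive measurability of the maximizer, $\mathcal H^m_{\mathrm{BMO}}$-control of the $\mathcal Z$-component, exponential moments of the densities defining the $\mathbb P^a$) are met, since these are exactly what makes the verification arguments below rigorous.

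\textbf{From a BSDE solution to a Nash equilibrium.} Suppose $(\mathcal Y,\mathcal Z)\in\mathcal S^\infty(\mathbb R^N)\times(\mathcal S^\infty(\mathbb R^N)\times\mathcal H^m_{\mathrm{BMO}}(\mathbb R^N))$ solves \eqref{edsr:max:nash}. I would first pick, by a measurable selection argument, an $\mathbb F$-progressively measurable selector and set $e_t^\star(\xi):=a^\star_{NA}(t,X_t,\mathcal Z_t)\in\mathcal A^\star_{NA}(t,X_t,\mathcal Z_t)$, noting that it is admissible because the BMO property of $\mathcal Z$ forces $b(\cdot,X_\cdot,e^\star(\xi)_\cdot)$ to generate a measure $\mathbb P^{e^\star(\xi)}$ equivalent to $\mathbb P$ with the required exponential moments. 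Then, fixing $i$ and freezing the other Agents at $(e^\star(\xi))^{:,-i}$, the $i$th line of \eqref{edsr:max:nash} is, by the fixed-point identity defining $\mathcal A^\star_{NA}$, exactly the scalar BSDE \eqref{bsde:i:intuition} with $a^{:,-i}=(e^\star(\xi))^{:,-i}$. A standard verification argument then finishes: for any $e\in\mathcal A^i((e^\star(\xi))^{:,-i})$ the process $t\mapsto\mathcal Y^i_t+\int_0^t\bigl(b(s,X_s,e_s\otimes_i(e^\star(\xi))^{:,-i})\cdot\mathcal Z^i_s-k^i(s,X_s,e_s)\bigr)ds$ is a $\mathbb P^{e\otimes_i(e^\star(\xi))^{:,-i}}$-supermartingale (generator inequality) and a true martingale for $e=(e^\star(\xi))^{:,i}$ (the maximizer), so that taking expectations at $T$ with $\mathcal Y^i_T=U_A^i(\xi^i)+\Gamma_i(X_T)$ yields $U_0^i(\xi^i,e\otimes_i(e^\star(\xi))^{:,-i})\le\mathcal Y^i_0=U_0^i(\xi^i,(e^\star(\xi))^{:,i}\otimes_i(e^\star(\xi))^{:,-i})$, i.e.\ \eqref{nash:def}. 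The very same computation shows that any other selector of $\mathcal A^\star_{NA}(t,X_t,\mathcal Z_t)$ is equally a Nash equilibrium.

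\textbf{From a Nash equilibrium to a BSDE solution.} Conversely, given a Nash equilibrium $e^\star(\xi)$, for each $i$ the problem \eqref{pb:agenti} with $(e^\star(\xi))^{:,-i}$ frozen is a scalar utility maximization which, by martingale representation under $\mathbb P$ and the arguments of \cite{rouge2000pricing,hu2005utility,elie2016contracting}, is characterized by the scalar BSDE \eqref{bsde:i:intuition}; call its solution $(\mathcal Y^i,\mathcal Z^i)$, lying in $\mathcal S^\infty\times(\mathcal S^\infty\times\mathcal H^m_{\mathrm{BMO}})$ under the standing assumptions. Optimality of $(e^\star(\xi))^{:,i}$ forces it to realize the supremum in the generator of \eqref{bsde:i:intuition} $dt\otimes d\mathbb P$-a.e., hence $e^\star(\xi)_t\in\mathcal A^\star_{NA}(t,X_t,\mathcal Z_t)$ once one assembles $\mathcal Z:=(\mathcal Z^1,\dots,\mathcal Z^N)$ into an $\mathcal M_N(\mathbb R)$-valued process. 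Stacking the $N$ scalar equations and using the Nash identity $e^\star(\xi)_t=(e^\star(\xi))^{:,i}_t\otimes_i(e^\star(\xi))^{:,-i}_t$ for every $i$, the stacked generator is precisely $f^\star_{NA}(s,X_s,\mathcal Z_s)$, so $(\mathcal Y,\mathcal Z)$ solves \eqref{edsr:max:nash}; its uniqueness in the class of Definition~\ref{def:sol:multidimBSDE} is the well-posedness statement imported from \cite{harter2016stability}, which also shows that the two constructions invert one another.

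\textbf{Main difficulty.} I expect the delicate point to be not the verification step but the integrability bookkeeping that underlies it: one must guarantee that the fixed-point maximizer can be selected $\mathbb F$-progressively measurable and that the resulting $\mathcal Z$ stays in $\mathcal H^m_{\mathrm{BMO}}(\mathbb R^N)$, so that all the Girsanov changes of measure $\mathbb P^a$ are equivalent to $\mathbb P$ with exponential moments and the local-martingale parts are genuine (super)martingales. This is exactly where the multidimensional quadratic-growth BSDE theory of \cite{harter2016stability} must be invoked, and where separability of the $U_0^i$ is essential to decouple the problem into the $N$ scalar BSDEs \eqref{bsde:i:intuition}. Once this is secured, the remainder is the bookkeeping of \cite[Theorem~4.1]{elie2016contracting} transcribed to the separable-utility setting.
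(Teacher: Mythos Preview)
Your sketch is correct and follows exactly the approach the paper adopts: the paper does not give an independent proof but states that the result ``follows exactly the same lines that the proof of Theorem~4.1 in \cite{elie2016contracting}'', which is precisely the two-direction verification/representation scheme you outline. The only minor comment is that in the converse direction you invoke \cite{harter2016stability} to secure uniqueness of the multidimensional BSDE, whereas the paper treats this as a separate issue (see Remark~\ref{remark:qgbsde}): uniqueness is either built into the definition of admissible contracts as in \cite{elie2016contracting}, or obtained under the additional regularity assumptions of Appendix~\ref{appendix:multidimBSDE}.
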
\vspace{0.5em}

\begin{Remark}\label{remark:qgbsde}
The previous theorem emphasizes that the existence of a Nash equilibrium is connected to the existence of a unique solution in the sense of Definition \ref{def:sol:multidimBSDE}. As explained in \cite{frei2011financial}, for instance, the problem is mathematically ill-posed in general and \cite{elie2016contracting} circumvents this problem by imposing the existence of a solution to BSDE \eqref{def:sol:multidimBSDE} in the definition of admissible contracts. However, as soon as the class of admissible contracts is sufficiently \textit{regular}, recent results \cite{xing2016class,harter2016stability} can be applied to ensure that the multidimensional BSDE \eqref{edsr:max:nash} is well-posed. We refer to Appendix \ref{appendix:multidimBSDE} for more details on this class of admissible contracts and Proposition \ref{prop:existence:qgbsde} which gives conditions ensuring that their exists a unique solution in the sense of Definition \ref{def:sol:multidimBSDE} of this BSDE.
\end{Remark}
\subsection{The problem of the Principal and multidimensional HJB equation}
As previously, this section is very informal to allege the reading since it is a mere extension of \cite{elie2016contracting} to separable and general utilities. The results are again completely similar to those in \cite{elie2016contracting} and can be proved by following exactly the same lines. For the sake of readability, we prefer to omit it to focus on the proofs of Section \ref{section:MOOP} below, which is the real contribution of this paper. \vspace{0.5em}

\noindent Assume that BSDE \eqref{edsr:max:nash} admits a unique solution $(Y,Z)\in \mathcal S^\infty(\mathbb R^N)\times (\mathcal S^\infty(\mathbb R^N)\times \mathcal H^m_{\text{BMO}}(\mathbb R^N))$ for some $m>1$. Let $a^\star_{NA}$ be a Nash equilibrium selected by the Agents (see \cite[Section 4.1.4]{elie2016contracting} for some selection criterion) or the Principal\footnote{In fact, if the Agents cannot select a Nash equilibrium, then the problem of the Principal is $$U_0^{P,NA}=\underset{a^\star_{NA}\in \mathfrak A^\star_{NA}(X,\mathcal Z)}{\sup_{\xi=(\xi^i)_{1\leq i\leq N} \in \mathcal C}} \mathbb E^{\mathbb P^{a^\star_{NA}(\cdot,X, \mathcal Z)}} \left[\mathcal U_P\left( \ell(X_T) - \sum_{i=1}^N \xi^i\right) \right], $$ where $ \mathfrak A^\star_{NA}$ is the restriction of $\mathcal A^\star$ with selection criterion for the Agents, and the results below are completely similar. It is why for the sake of simplicity, we assume that only one Nash equilibrium is selected here, to allege the reading.} if the Agents cannot select it. Recall that the Principal aims at solving
\begin{equation}\label{pb:primal:principal:nash}
U_0^{P,NA}=\sup_{\xi=(\xi^i)_{1\leq i\leq N} \in \mathcal C} \mathbb E^{\mathbb P^{a^\star_{NA}(\cdot,X, \mathcal Z)}} \left[\mathcal U_P\left( \ell(X_T) - \sum_{i=1}^N \xi^i\right) \right].
\end{equation}
As explained in \cite{sannikov2008continuous, cvitanic2014moral,cvitanic2017dynamic}, by mimicking \cite[Section 4.2]{elie2016contracting} and in view of the decomposition \eqref{edsr:max:nash} of admissible contracts, one can show\footnote{See Section \ref{section:characterization} which provides a path to do it or \cite{elie2016contracting} for exponential utilities.} that solving this problem remains to solve a stochastic control problem with two state variables: the value of the firm $X$ and the value function of the $i$th Agent for any $i\in \{1,\dots,N\}$. This suggests to introduce the following Hamiltonian $H^{NA}:[0,T]\times \mathbb R^N\times \mathbb R^N\times  \mathbb R^N\times \mathcal M_N(\mathbb R)\times \mathcal M_N(\mathbb R) \times \mathcal M_N(\mathbb R)\longrightarrow \mathbb R$ by
\begin{align*}
H^{NA}(t,x,p,q,P,Q,R)&:=\sup_{z\in \mathcal M_N(\mathbb R)}\Big\{ b(t,x, a^\star_{NA}(t,x,z))\cdot p+  f^{\star}_{NA}(t,x,z)\cdot q \\
&+\frac12 \text{Tr} \left( \Sigma(t,x) \Sigma(t,x)^\top P\right)+\frac12 \text{Tr} \left(z^\top \Sigma(t,x) \Sigma(t,x)^\top z Q\right)\\
&+\text{Tr}\left( \Sigma(t,x) \Sigma(t,x)^\top z R\right) \Big\}.
\end{align*}

\noindent Thus, the HJB equation associated with \eqref{pb:primal:principal:nash} is 
$$\textbf{(HJB-Na)}\begin{cases}
\displaystyle -\partial_t v(t,x,y)-H^{NA}(t,x,\nabla_x v,\nabla_y v,\Delta_{xx}v,\Delta_{xy}v,\Delta_{xy}v)&=0,\; (t,x,y)\in [0,T)\times \R^N\times \R^N\\
v(T,x,y)=\mathcal U_P\left(\ell(x)-\sum_{i=1}^N (U_A^{i})^{(-1)}(y^i-\Gamma_i(x))\right).
\end{cases}$$
As usual, using a classical verification theorem (see for instance \cite{touzi2012optimal}) we get the following result\vspace{0.5em}

\begin{Theorem} Assume that PDE \textbf{(HJB-Na)} admits a unique solution $v$ continuously differentiable with respect to $t$ and twice continuously differentiable with respect to its spaces variables and that for any $t,x,p,q,P,Q,R\in [0,T]\times \mathbb R^N\times \mathbb R^N\times  \mathbb R^N\times \mathcal M_N(\mathbb R)\times \mathcal M_N(\mathbb R) \times \mathcal M_N(\mathbb R)$, the supremum in $H^{NA}(t,x,p,q,P,Q,R)$ is attained for (at least) one $z^\star_{NA}(t,x,p,q,P,Q,R)$. Moreover, for any $Y^i_0\geq R_0^i, \; 1\leq i\leq N$, we assume that the following coupled SDE
$$\begin{cases}
\displaystyle X^\star_t&=x+\int_0^t \Sigma(s,X_s^\star)dW_s,\\
\displaystyle Y_t^{\star,Y_0}&=Y_0-\int_0^t  f^\star_{NA}(s,X^\star_s,z^{\star,NA}_s) ds+\int_0^t (z^{\star,NA}_s)^{\top}\Sigma(s,X^\star_s)dW_s,
\end{cases}$$
admits a unique solution $(X^\star,Y^{\star,Y_0})$ where 
$$z^{\star, NA}_t:=z^\star_{NA}(t,X^\star_t,\nabla_x v,\nabla_y v,\Delta_{xx}v,\Delta_{yy}v,\Delta_{xy}v).$$
If moreover $z^{\star,NA}\in \mathcal H^m_{\text{BMO}}(\mathcal M_N( \R))$ and $(U_A^i)^{(-1)}((Y_T^{\star, R_0})^i-\Gamma_i(X^\star_T))\in \mathcal C$ for any $1\leq i\leq N$, then $$\xi:=\left((U_A^i)^{(-1)}((Y_T^{\star,R_0})^i-\Gamma_i(X^\star_T))\right)_{1\leq i\leq N}$$ is an optimal contract which solves the Principal problem \eqref{pb:primal:principal:nash} with
$$U_0^{P}=v(0,x,R_0). $$
 \end{Theorem}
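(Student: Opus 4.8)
The plan is to prove this as a verification theorem: to show $U_0^{P,NA}\le v(0,x,R_0)$ for every admissible contract, and that equality is attained by the contract $\xi$ announced in the statement.

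First, by Theorem \ref{thm:multiqgbsde} each $\xi\in\mathcal C$ is characterised by the unique solution $(Y,Z)\in\mathcal S^\infty(\R^N)\times(\mathcal S^\infty(\R^N)\times\mathcal H^m_{\text{BMO}}(\R^N))$ of \eqref{edsr:max:nash} with terminal value $(U_A^i(\xi^i)+\Gamma_i(X_T))_{1\le i\le N}$, the induced Nash equilibrium being $a^\star_{NA}(\cdot,X_\cdot,Z_\cdot)$; conversely $\xi$ is recovered from a pair $(Y_0,Z)$ via $\xi^i=(U_A^i)^{(-1)}((Y_T^{Y_0,Z})^i-\Gamma_i(X_T))$, where $Y^{Y_0,Z}$ solves the forward $Y$-equation of the coupled system. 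As explained before the statement of \textbf{(HJB-Na)}, the Principal's problem \eqref{pb:primal:principal:nash} thus reduces to a stochastic control problem with controlled state $(X,Y^{Y_0,Z})$ and control $(Y_0,Z)$, the participation constraints \eqref{reservation:constrain} amounting to $Y_0\ge R_0$ componentwise. Since $\mathcal U_P$ and the $(U_A^i)^{(-1)}$ are increasing, the Principal gains by choosing $Y_0$ as small as possible, i.e. $Y_0=R_0$; together with a comparison principle for \textbf{(HJB-Na)} (which makes $y\mapsto v(0,x,y)$ non-increasing) this gives $U_0^{P,NA}=\sup_Z\E^{\mathbb P^{a^\star_{NA}(\cdot,X,Z)}}\big[\mathcal U_P\big(\ell(X_T)-\sum_{i=1}^N(U_A^i)^{(-1)}((Y_T^{R_0,Z})^i-\Gamma_i(X_T))\big)\big]$.

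Next comes the core supermartingale estimate. Fix an admissible control $Z$, put $Y:=Y^{R_0,Z}$, and apply It\^o's formula to $v(t,X_t,Y_t)$, with $v$ the assumed $C^{1,2}$ solution of \textbf{(HJB-Na)}. Writing the dynamics of $(X,Y)$ under $\mathbb P^{a^\star_{NA}(\cdot,X,Z)}$ via Girsanov, so that $X$ carries the drift $b(\cdot,X,a^\star_{NA})$ as in \eqref{dynamicX}, the finite-variation part of $(v(t,X_t,Y_t))_{t\in[0,T]}$ equals $\partial_t v$ plus the bracketed expression defining $H^{NA}$ evaluated at $z=Z_t$, hence is dominated by $\partial_t v+H^{NA}(t,X_t,\nabla_x v,\nabla_y v,\Delta_{xx}v,\Delta_{yy}v,\Delta_{xy}v)=0$. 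Thus $(v(t,X_t,Y_t))_{t\in[0,T]}$ is a local $\mathbb P^{a^\star_{NA}}$-supermartingale; localising along $\tau_n\uparrow T$ and passing to the limit — using the $\mathcal S^\infty$-bound on $Y$, the boundedness of $\Sigma$, the $\mathcal H^m_{\text{BMO}}$-norm of $Z$, the exponential moments of $X$ and the integrability of the change of measure $\mathbb P\to\mathbb P^{a^\star_{NA}}$ to produce a uniformly integrable majorant — yields $v(0,x,R_0)\ge\E^{\mathbb P^{a^\star_{NA}}}[v(T,X_T,Y_T)]$. Since $Y_T^i=U_A^i(\xi^i)+\Gamma_i(X_T)$, the terminal condition of \textbf{(HJB-Na)} makes the right-hand side equal the Principal's payoff under $\xi$; taking the supremum over $Z$ gives $U_0^{P,NA}\le v(0,x,R_0)$.

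Finally, for attainment take $Z=z^{\star,NA}$ along the solution $(X^\star,Y^{\star,R_0})$ of the coupled system, which exists and is unique by hypothesis; since $z^{\star,NA}\in\mathcal H^m_{\text{BMO}}(\mathcal M_N(\R))$ and $\xi:=((U_A^i)^{(-1)}((Y_T^{\star,R_0})^i-\Gamma_i(X^\star_T)))_{1\le i\le N}\in\mathcal C$, this $\xi$ is admissible with Nash equilibrium $a^\star_{NA}(\cdot,X^\star,z^{\star,NA})$. Repeating the It\^o computation along this path, the choice of the maximiser $z^\star_{NA}$ turns the previous inequality into an equality, so $(v(t,X^\star_t,Y^{\star,R_0}_t))_{t\in[0,T]}$ is a true (uniformly integrable) $\mathbb P^{a^\star_{NA}}$-martingale and $v(0,x,R_0)=\E^{\mathbb P^{a^\star_{NA}(\cdot,X^\star,z^{\star,NA})}}[\mathcal U_P(\ell(X^\star_T)-\sum_{i=1}^N\xi^i)]$, proving that \eqref{pb:primal:principal:nash} is attained at $\xi$ with $U_0^{P,NA}=v(0,x,R_0)$. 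The main difficulty here is technical rather than conceptual: as $v$ is only $C^{1,2}$ and a priori unbounded, the passage from local to genuine (super)martingales requires explicit integrable dominating functions extracted from the structural estimates on $(X,Y,Z)$ and uniform control of the change of measure in the localisation; the monotonicity of $v(0,x,\cdot)$ used in the reduction step likewise relies on a comparison principle for \textbf{(HJB-Na)}.
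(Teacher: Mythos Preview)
Your proposal is correct and follows exactly the classical verification argument that the paper has in mind: the paper does not actually give a proof of this theorem, stating only that it ``can be proved by following exactly the same lines'' as \cite{elie2016contracting} and invoking ``a classical verification theorem (see for instance \cite{touzi2012optimal})''. Your reduction of the Principal's problem to a control problem in $(X,Y)$ with control $Z$, saturation of the participation constraint at $Y_0=R_0$, the It\^o/supermartingale inequality, and the martingale attainment at $z^{\star,NA}$ constitute precisely that verification scheme, so there is nothing to add.
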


\section{Multi Objective Optimization Problem with separable preferences}\label{section:MOOP}
Assume now that the Agent does not manage their efforts which are chosen by an exogenous Planner who acts for the welfare of the Agents. We fix a sequence of admissible contracts $\xi:=(\xi^i)_{1\leq i\leq N}$. As before, in all this section, we work under technical assumptions on the coefficients $b,k$ given in Appendix \ref{appendix:model} (see Assumption \ref{assumption:bk}).

\subsection{Solving the multi objective problem using scalarization}
Let $(\lambda_i)_{1\leq i\leq N}$ be a sequence of nonnegative reals such that $\sum_{i=1}^N \lambda_i=1$ and consider the multi-objective optimization problem
\begin{equation}\label{pb:multiojective:weight}
\sup_{a\in \mathcal A} \sum_{i=1}^N \lambda_i \mathbb E^{\mathbb P^a}\left[U_A^i(\xi^i)+ \Gamma_i(X_T)-\int_0^T k^i(t,X_t,a^{:,i}_t) dt \right].
\end{equation}

\noindent The following proposition gives sufficient conditions to find Pareto optima through solutions to the MOOP.  \vspace{0.5em}

\begin{Proposition}[Theorem 3.1.2 in \cite{miettinen2012nonlinear}]\label{prop:pareto}
If their exists $\lambda_i>0$ for any $i\in \{1,\dots,N\}$ with $\sum_{i=1}^N \lambda_i=1$ such that the multi-objective optimization problem \eqref{pb:multiojective:weight} has a solution denoted by $a^\star(\xi, \lambda_1,\dots, \lambda_N)$, then $a^\star(\xi, \lambda_1,\dots, \lambda_N)$ is Pareto optimal.
\end{Proposition}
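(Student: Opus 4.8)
The plan is to prove this by contradiction, using the standard scalarization argument for multi-objective optimization. Suppose that $a^\star := a^\star(\xi,\lambda_1,\dots,\lambda_N)$ is a solution of the scalarized problem \eqref{pb:multiojective:weight} but is \emph{not} Pareto optimal. By the definition of Pareto optimality, there would then exist an admissible action $a \in \mathcal A$ such that $U_0^i(\xi,a^\star) \leq U_0^i(\xi,a)$ for every $i \in \{1,\dots,N\}$, with strict inequality $U_0^i(\xi,a^\star) < U_0^i(\xi,a)$ holding for at least one index, say $i_0$.

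The next step is to combine these inequalities against the weights. Multiply the $i$th inequality by $\lambda_i > 0$ (here I use crucially the hypothesis that \emph{all} the weights are strictly positive, not merely nonnegative summing to one) and sum over $i$. Since $\lambda_{i_0} U_0^{i_0}(\xi,a^\star) < \lambda_{i_0} U_0^{i_0}(\xi,a)$ strictly and all other terms satisfy $\lambda_i U_0^i(\xi,a^\star) \leq \lambda_i U_0^i(\xi,a)$, we obtain
\[
\sum_{i=1}^N \lambda_i U_0^i(\xi,a^\star) < \sum_{i=1}^N \lambda_i U_0^i(\xi,a).
\]
Recalling that $U_0^i(\xi,a) = \mathbb E^{\mathbb P^a}\big[U_A^i(\xi^i) + \Gamma_i(X_T) - \int_0^T k^i(t,X_t,a^{:,i}_t)\,dt\big]$, the left-hand side is precisely the value $\sum_{i=1}^N \lambda_i \mathbb E^{\mathbb P^{a^\star}}[\cdots]$ attained by $a^\star$ in \eqref{pb:multiojective:weight}, and the right-hand side is the value attained by the admissible action $a$. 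This contradicts the assumption that $a^\star$ is a \emph{maximizer} of \eqref{pb:multiojective:weight} over $\mathcal A$, since $a$ would then be a strictly better admissible competitor.

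There is really no serious obstacle here; this is the classical sufficient condition (Theorem 3.1.2 in \cite{miettinen2012nonlinear}), and the only point requiring a moment of care is the role of the strict positivity of the $\lambda_i$: with merely nonnegative weights one would only get weak Pareto optimality, so the hypothesis $\lambda_i > 0$ must be invoked explicitly at the summation step to preserve the strict inequality. One should also note in passing that $a$ and $a^\star$ being admissible means both lie in $\mathcal A$, so all the expectations under $\mathbb P^a$ and $\mathbb P^{a^\star}$ are well-defined by the standing assumptions of Appendix \ref{appendix:model}, and the contradiction is genuinely within the feasible set of the scalarized problem.
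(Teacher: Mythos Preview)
Your argument is correct and is precisely the classical scalarization proof: assume a Pareto-dominating admissible action exists, take the $\lambda$-weighted sum, use strict positivity of every $\lambda_i$ to preserve the strict inequality, and contradict optimality in \eqref{pb:multiojective:weight}. The paper does not actually supply its own proof of this proposition---it simply invokes Theorem~3.1.2 of \cite{miettinen2012nonlinear}---and your write-up is exactly the standard argument behind that reference, including the correct emphasis on why $\lambda_i>0$ (rather than $\lambda_i\geq 0$) is needed to obtain Pareto optimality rather than merely weak Pareto optimality.
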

\noindent The coefficient $\lambda_i$ can be seen as the part chosen by the Planner of the utility of Agent $i$ to maximize the general weighted utility of the Agents.\footnote{See \cite[Proposition 16.F.1]{mas1995microeconomic}, $\lambda^i$ can also be seen as the inverse of the marginal utility of Agent $i$. }
\noindent Fix a sequence $\lambda:=(\lambda_i)_{1\leq i\leq N}$ with $\lambda_i>0, \, i=1,\dots, N$. We set for any $(t,x,a)\in [0,T]\times \mathbb R^N\times \mathcal M_N(A)$
$$U_A^\lambda(\xi):= \sum_{i=1}^N \lambda_i U_A^i(\xi^i), \; \Gamma^\lambda(x):=\sum_{i=1}^N\lambda_i\Gamma_i(x),\; k^\lambda(t, x, a):= \sum_{i=1}^N\lambda_i k^i(t,x,a^{:,i}),
$$
Problem \eqref{pb:multiojective:weight} thus becomes
\begin{equation}\label{pb:multiojective:weight:bis}
u_0^\lambda(\xi):=\sup_{a\in \mathcal A} u_0^\lambda(\xi,a),
\end{equation}
with
$$ u_t^\lambda(\xi,a):= \mathbb E^{\mathbb P^a}\left[U_A^\lambda(\xi)+\Gamma^\lambda(X_T)-\int_t^T k^\lambda(t,X_t,a_t) dt \Big| \mathcal F_t\right],\, t\in [0,T].$$

\noindent We consider the following BSDE for any $a\in \mathcal A$
\begin{equation}\label{bsde:agent:a}
Y_t^{\lambda, a}= U_A^\lambda(\xi)+\Gamma^\lambda(X_T) +\int_t^T \left(b(s, X_s, a_s)\cdot Z_s^{\lambda, a} - k^\lambda(s,X_s, a_s)\right)ds-\int_t^T Z_s^{\lambda, a} \cdot dX_s.
\end{equation}

\noindent We thus have the following Lemma, whose the proof is postponed to Appendix \ref{appendix:proof}\vspace{0.5em}

\begin{Lemma}\label{lemma:bsdeagent} BSDE \eqref{bsde:agent:a} admits a unique solution $(Y^{\lambda, a}, Z^{\lambda,a})\in \mathcal E(\R)\times \bigcap_{p\geq 1} \mathcal H^p(\mathbb R^N)$ such that
$$Y_t^{\lambda, a}=u_t^\lambda(\xi,a). $$
\end{Lemma}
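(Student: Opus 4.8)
The plan is to solve \eqref{bsde:agent:a} explicitly by passing to the measure $\mathbb P^a$ under which the drift term cancels, and then invoke the martingale representation theorem; the substantive work is the integrability bookkeeping.

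First, observe that since $dX_s = b(s,X_s,a_s)\,ds + \Sigma(s,X_s)\,dW^a_s$ under $\mathbb P^a$, a pair $(Y,Z)$ satisfies \eqref{bsde:agent:a} if and only if
\[
Y_t = U_A^\lambda(\xi) + \Gamma^\lambda(X_T) - \int_t^T k^\lambda(s,X_s,a_s)\,ds - \int_t^T Z_s\cdot\Sigma(s,X_s)\,dW^a_s,
\]
which is a BSDE under $\mathbb P^a$ whose generator does not depend on $(Y,Z)$. Accordingly, I would set
\[
Y_t^{\lambda,a} := \mathbb E^{\mathbb P^a}\!\Big[\,U_A^\lambda(\xi) + \Gamma^\lambda(X_T) - \int_t^T k^\lambda(s,X_s,a_s)\,ds\,\Big|\,\mathcal F_t\Big] = u_t^\lambda(\xi,a),
\]
the last equality being the definition of $u^\lambda$. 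The process $M_t := Y_t^{\lambda,a} + \int_0^t k^\lambda(s,X_s,a_s)\,ds$ is then a $\mathbb P^a$-martingale on $[0,T]$; since $W^a$ is a $\mathbb P^a$-Brownian motion generating $\mathbb F$, the martingale representation theorem yields a predictable process $\widetilde Z$ with $M_t = M_0 + \int_0^t \widetilde Z_s\cdot dW^a_s$. Setting $Z_s^{\lambda,a} := (\Sigma(s,X_s)^\top)^{-1}\widetilde Z_s$, which is licit since $\Sigma$ is invertible and bounded with bounded inverse, and using $\Sigma_s\,dW^a_s = dX_s - b(s,X_s,a_s)\,ds$ to unwind the definition of $M$, one recovers exactly \eqref{bsde:agent:a}, with $Y_t^{\lambda,a}=u_t^\lambda(\xi,a)$ by construction.

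Next I would verify membership in the stated spaces. Admissibility of the contract $\xi\in\mathcal C$, Assumption \textbf{(G)} on $\Gamma_i$, the growth conditions on $k^i$ in Assumption \ref{assumption:bk}, and the exponential moments of $X$ together bound the terminal data and the running cost in every $L^p(\mathbb P)$; since $a$ is admissible, the density $d\mathbb P^a/d\mathbb P$ and its inverse have moments of all orders, so a H\"older argument transfers these bounds to $\mathbb P^a$ and back and gives $Y^{\lambda,a}\in\mathcal E(\R)$. For $Z^{\lambda,a}$, I would apply the Burkholder--Davis--Gundy inequality to $M$ under $\mathbb P^a$ to control $\big(\int_0^T|\widetilde Z_s|^2\,ds\big)^{p/2}$ in $L^p(\mathbb P^a)$, transfer to $\mathbb P$ by H\"older against the density moments, and use that $|Z_s^{\lambda,a}|$ is bounded by a constant times $|\widetilde Z_s|$ to conclude $Z^{\lambda,a}\in\bigcap_{p\ge1}\mathcal H^p(\mathbb R^N)$; in particular $\int_0^\cdot Z_s^{\lambda,a}\cdot\Sigma_s\,dW^a_s$ is a genuine $\mathbb P^a$-martingale.

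Finally, for uniqueness, let $(Y,Z)$ and $(Y',Z')$ be two solutions in $\mathcal E(\R)\times\bigcap_{p\ge1}\mathcal H^p(\mathbb R^N)$. Rewriting each under $\mathbb P^a$ as above, the integrability just established makes the corresponding stochastic integrals true $\mathbb P^a$-martingales, so taking $\mathbb E^{\mathbb P^a}[\,\cdot\,|\mathcal F_t]$ in both equations forces $Y_t = u_t^\lambda(\xi,a) = Y'_t$ for every $t$; then $\int_t^T (Z_s-Z'_s)\cdot\Sigma_s\,dW^a_s = 0$ for all $t$, whence $\int_0^T |(Z_s-Z'_s)^\top\Sigma_s|^2\,ds = 0$, and invertibility of $\Sigma$ yields $Z = Z'$ $dt\otimes d\mathbb P$-a.e. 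I expect the existence part to be essentially immediate; the main obstacle is the third step, where one must carefully combine the exponential moments of $X$, the moment bounds on the Girsanov density afforded by admissibility of $a$, and the growth conditions of Assumption \ref{assumption:bk} and \textbf{(G)} both to land in $\mathcal E(\R)\times\bigcap_{p\ge1}\mathcal H^p$ and, crucially, to guarantee that the stochastic integrals are true martingales, since the conditional-expectation representation underpinning uniqueness depends on it.
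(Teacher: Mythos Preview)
Your argument is correct and, in fact, more detailed than the paper's own proof, but it proceeds along a different route. The paper simply invokes the general existence and uniqueness theory for quadratic BSDEs with unbounded terminal data in \cite{briand2008quadratic}: since the generator $z\mapsto b(s,X_s,a_s)\cdot z - k^\lambda(s,X_s,a_s)$ is linear (hence convex) in $z$ and the terminal condition has exponential moments of all orders by the definition of $\mathcal C$, Assumption~\textbf{(G)} and Assumption~\ref{assumption:bk}, the Briand--Hu result yields a unique solution in $\mathcal E(\mathbb R)\times\bigcap_{p\ge1}\mathcal H^p(\mathbb R^N)$ directly, after which the identification $Y_t^{\lambda,a}=u_t^\lambda(\xi,a)$ follows by the same Girsanov change of measure you perform. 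By contrast, you exploit the specific linear structure to pass to $\mathbb P^a$ first, reducing the BSDE to one with $(Y,Z)$-independent generator and solving it constructively via martingale representation, which makes the argument self-contained and avoids the quadratic-BSDE machinery altogether. The trade-off is that the paper's approach is a one-line citation, whereas yours requires the integrability bookkeeping you outline (exponential moments of the conditional expectation process, transfer of $L^p$-bounds across the Girsanov density); both are valid, and your approach has the merit of transparency about exactly which structural features of the data are being used.
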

\noindent Let $\mathcal A^\star(x,z,\lambda)$ be defined for any $(x,z,\lambda)\in \mathbb R^N\times\mathbb R^N\times (0,1)^N$ by
$$ \mathcal A^\star(x,z,\lambda):=\left\{ \alpha\in \mathcal A,\; \alpha_s\in \underset{a\in \mathcal M_N(A)}{\text{argmax}}\left\{ b(s, x, a)\cdot z - k^\lambda(s,x, a)\right\} ,\; \text{for a.e. } s\in [0,T]\right\}.  $$
We define for any $a^\star(x,z,\lambda)\in \mathcal A^\star(x,z,\lambda)$
$$f^\star(s,x,z,\lambda):= b(s, x, a^\star(x,z,\lambda))\cdot z - k^\lambda(s,x, a^\star(x,z,\lambda)). $$ We consider the following BSDE
\begin{equation}\label{BSDE:agent:sol}
Y_t^\lambda=U_A^\lambda(\xi)+\Gamma^\lambda(X_T)+\int_t^T f^\star(s,X_s,Z_s^\lambda,\lambda) ds-\int_t^T Z_s^\lambda\cdot dX_s
\end{equation}
The following result solves the problem of the Planner and we refer to Appendix \ref{appendix:proof} for its proof.\vspace{0.5em}
\begin{Theorem}\label{thm:pbPlanner} BSDE \eqref{BSDE:agent:sol} admits a (unique) solution denoted by $(Y^\lambda, Z^\lambda)\in \mathcal E(\mathbb R)\times \bigcap_{p\geq 1}\mathcal H^p(\mathbb R^N)$ such that
$$Y_0^\lambda=u_0^\lambda(\xi),$$
and any $a^\star$ in $\mathcal A^\star(X,Z^\lambda,\lambda)$ is Pareto optimal for any $\lambda\in (0,1)^N$ such that $\lambda\cdot \mathbf 1_N=1$.
\end{Theorem}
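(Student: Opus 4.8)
The plan is to treat the scalarised problem \eqref{pb:multiojective:weight:bis} as a standard expected-utility maximisation over the admissible efforts $a\in\mathcal A$, to identify its value $u_0^\lambda(\xi)$ with the initial value $Y_0^\lambda$ of the ``sup--BSDE'' \eqref{BSDE:agent:sol} by a comparison and verification argument, and then to conclude Pareto optimality through the scalarisation principle of Proposition \ref{prop:pareto}.

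First I would establish that \eqref{BSDE:agent:sol} is wellposed in $\mathcal E(\mathbb R)\times\bigcap_{p\geq1}\mathcal H^p(\mathbb R^N)$. Under Assumption \ref{assumption:bk} the map $a\mapsto b(s,x,a)\cdot z-k^\lambda(s,x,a)$ attains its supremum over $\mathcal M_N(A)$, so $f^\star(s,x,z,\lambda)$ is finite; being a supremum of affine functions of $z$ with slopes bounded by $\|b\|_\infty$, it is Lipschitz in $z$ with linear growth. A measurable selection theorem then yields an $\mathbb F$--progressively measurable selector of the argmax set, which is admissible by Assumption \ref{assumption:bk}, so that $\mathcal A^\star(X,Z^\lambda,\lambda)\neq\emptyset$ once $(Y^\lambda,Z^\lambda)$ is constructed. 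Since $\xi\in\mathcal C$ and $\Gamma^\lambda$ satisfies Assumption $\textbf{(G)}$, the terminal value $U_A^\lambda(\xi)+\Gamma^\lambda(X_T)$ has exponential moments, and existence and uniqueness of $(Y^\lambda,Z^\lambda)$ follow from the same BSDE wellposedness result used to prove Lemma \ref{lemma:bsdeagent}.

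Next I would prove $Y_0^\lambda=u_0^\lambda(\xi)$ in two inequalities. For the inequality $\geq$, fix $a\in\mathcal A$ and rewrite \eqref{BSDE:agent:sol} under $\mathbb P^a$ using $dX_s=b(s,X_s,a_s)\,ds+\Sigma_s\,dW_s^a$:
\[
Y_t^\lambda=U_A^\lambda(\xi)+\Gamma^\lambda(X_T)+\int_t^T\big(f^\star(s,X_s,Z_s^\lambda,\lambda)-b(s,X_s,a_s)\cdot Z_s^\lambda\big)\,ds-\int_t^T Z_s^\lambda\cdot\Sigma_s\,dW_s^a .
\]
By definition of $f^\star$ as a supremum, $f^\star(s,X_s,Z_s^\lambda,\lambda)-b(s,X_s,a_s)\cdot Z_s^\lambda\geq-k^\lambda(s,X_s,a_s)$ pointwise; since $Z^\lambda\in\bigcap_{p\geq1}\mathcal H^p$, $\Sigma$ is bounded and the density of $\mathbb P^a$ has exponential moments, the stochastic integral is a true $\mathbb P^a$--martingale after a routine localisation, so taking $\mathbb E^{\mathbb P^a}$ gives $Y_0^\lambda\geq u_0^\lambda(\xi,a)$, and then $Y_0^\lambda\geq u_0^\lambda(\xi)$ by taking the supremum over $a$. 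For the reverse inequality, pick $a^\star\in\mathcal A^\star(X,Z^\lambda,\lambda)$; on this selector the displayed inequality is an equality, so $(Y^\lambda,Z^\lambda)$ solves \eqref{bsde:agent:a} with $a=a^\star$, and uniqueness in Lemma \ref{lemma:bsdeagent} yields $Y^\lambda=Y^{\lambda,a^\star}$, hence $Y_0^\lambda=u_0^\lambda(\xi,a^\star)\leq u_0^\lambda(\xi)$. Therefore $Y_0^\lambda=u_0^\lambda(\xi)=u_0^\lambda(\xi,a^\star)$, i.e. $a^\star$ solves \eqref{pb:multiojective:weight:bis}, equivalently \eqref{pb:multiojective:weight}; since $\lambda_i>0$ for every $i$ and $\lambda\cdot\mathbf 1_N=1$, Proposition \ref{prop:pareto} shows $a^\star$ is Pareto optimal.

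The hard part will be the two technical ingredients rather than the algebra: constructing an \emph{adapted} and \emph{admissible} measurable selector of the argmax set, which is exactly where the structural content of Assumption \ref{assumption:bk} is used, and justifying the true-martingale property / uniform integrability needed to pass from the pathwise BSDE identity to the inequality on $\mathbb P^a$--conditional expectations in the unbounded class $\mathcal E(\mathbb R)$; both are delivered by the exponential-moment hypotheses on $\xi$, $\Gamma^\lambda$ and the density of $\mathbb P^a$, but must be handled with care.
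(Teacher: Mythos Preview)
Your overall strategy matches the paper's: establish wellposedness of \eqref{BSDE:agent:sol}, identify $Y_0^\lambda$ with the value $u_0^\lambda(\xi)$ by a comparison/verification argument between \eqref{BSDE:agent:sol} and the family \eqref{bsde:agent:a}, and conclude Pareto optimality via Proposition \ref{prop:pareto}. The verification step you write out explicitly is exactly what the paper obtains by citing the comparison theorem of \cite{briand2008quadratic}, and your treatment of the two inequalities is correct.

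There is, however, a genuine gap in your wellposedness argument. You assert that $f^\star$ is Lipschitz in $z$ ``being a supremum of affine functions of $z$ with slopes bounded by $\|b\|_\infty$'', but $b$ is \emph{not} bounded: Assumption \ref{assumption:bk} only gives $|b^i(t,x,a)|\leq C(1+\|x\|+\|a\|)$, linear in $a$, with $a$ ranging over an unbounded set. Consequently $f^\star$ is convex in $z$ but not Lipschitz; its growth is governed by the size of the maximiser $a^\star$. The paper controls this through the estimate $\|a^\star(s,x,z,\lambda)\|\leq C_a(1+\|z\|^{1/(\underline m+1-l)})$ (equation \eqref{growth:astar}, from \cite[Lemma 4.1]{emp16}), which together with Condition \eqref{condition:coef} yields that both $b(\cdot,a^\star)\cdot z$ and $k^\lambda(\cdot,a^\star)$ have at most \emph{quadratic} growth in $z$. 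This is precisely why the paper invokes the quadratic BSDE theory with unbounded terminal data of \cite{briand2006bsde,briand2008quadratic} rather than a Lipschitz result. Your argument as written would not go through; you need to replace the Lipschitz claim by this quadratic-growth estimate and appeal to the same quadratic BSDE wellposedness used in the paper.
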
\vspace{0.5em}

\begin{Remark}\label{remak:choix}
Concerning the choice of the parameter $\lambda$ by the Planner, one can assume that the Planner is penalized by a bad choice of Pareto optima given some external criteria. For instance, the Planner could be forced to choose a class of $\lambda$ depending on the performance of any Agents relatively to the others, \textit{e.g.} $\lambda_i\in [\frac{k^{ii}}{\sum_{j=1}^N k^{ji}}\pm \varepsilon_i]$ for some $\varepsilon_i>0$ with $k^{ji}>0$ the underlying cost of the Agent $i$ to manage project $j$. Mathematically, the Planner aims at solving for instance
$$\min_{\lambda\in (0,1)^N}\mathbb E^{\mathbb P^{a^\star(X,Z^\lambda,\lambda)}}\left[ \int_0^T c^{Pe}(s,X_s,\lambda) ds\right],$$ for some cost function $c^{Pe}$ depending on the characteristics of the Agents. An other section criterion could be to take any parameter $\lambda$ and the corresponding Pareto optima which are Nash equilibria. Thus, we deal in the following with a parameter $\lambda^\star$ chosen by the Planner due to some selection criterions. If this $\lambda^\star$ is not unique, we assume that the Principal have the final say on this choice and maximizes her utility over all the selected Pareto optima. We also assume that in this case that $\mathcal A^\star(X,Z^{\lambda^\star},\lambda^\star)$ is reduced to a singleton for the sake of simplicity (if not and as usual, the Principal also maximizes her utility on any Pareto optima with parameter $\lambda^\star$).
\end{Remark}
\subsection{Characterization of the set of admissible contracts}\label{section:characterization}
Let $(Y^\lambda, Z^\lambda)$ be the (unique) solution to BSDE \eqref{BSDE:agent:sol}. We fix $\lambda$ and the Pareto optimum $a^\star$ in $\mathcal A^\star(X,Z^\lambda,\lambda)$ (reduced to a singleton for the sake of simplicity as explained in Remark \ref{remak:choix}). We consider the following BSDE
\begin{align}\label{BSDE:agent:i}
\nonumber Y_t^i&=U_A^i(\xi^i)+ \Gamma_i(X_T)+\int_t^T \left(b(s,X_s, a^\star(s,X_s,Z_s^\lambda,\lambda))\cdot Z_s^i- k^i(s,X_s, (a^{\star}(s,X_s,Z_s^\lambda,\lambda))^{:,i})\right) ds\\
&-\int_t^T Z_s^i\cdot dX_s
\end{align}
We thus have the fundamental following Proposition and we refer to Appendix \ref{appendix:proof} for its proof
\begin{Proposition}\label{prop:caract} 
For any $i=1,\dots, N$, BSDE \eqref{BSDE:agent:i} admits a unique solution $(Y^i,Z^i) \in  \mathcal E(\mathbb R)\times \bigcap_{p\geq 1}\mathcal H^p(\mathbb R^N) $ such that
$$Y^i_0= U_0^i(\xi, a^\star(s,X_s,Z_s^\lambda,\lambda) ).$$
Besides, let $(Y^\lambda,Z^\lambda)$ be the (unique) solution of BSDE \eqref{BSDE:agent:sol}, we have  
$$Y^\lambda=\sum_{i=1}^N \lambda_i Y^i, \; Z^\lambda=\sum_{i=1}^N \lambda_i Z^i $$ and the following decomposition for any admissible contracts holds for any $1\leq i \leq N$
\begin{align}\label{caract:xi}
\nonumber U_A^i(\xi^i)&=Y_0^i-\Gamma_i(X_T)\\
&-\int_0^T \left(b(s,X_s, a^\star(s,X_s,Z_s^\lambda,\lambda))\cdot Z_s^i- k^i(s,X_s, (a^{\star}(s,X_s,Z^\lambda_s, \lambda))^{:,i})\right) ds+\int_0^T Z_s^i\cdot dX_s.
\end{align}
\end{Proposition}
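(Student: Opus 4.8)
\emph{Proof strategy.} The plan is to dispose of the three assertions in order, the crucial observation being that, once $(Y^\lambda,Z^\lambda)$ has been produced by Theorem \ref{thm:pbPlanner}, the Pareto-optimal action $a^\star_s:=a^\star(s,X_s,Z^\lambda_s,\lambda)$ is simply a \emph{fixed} admissible process in $\mathcal A$; hence BSDE \eqref{BSDE:agent:i} has a driver that is affine in $Z^i$ and carries no $Y^i$-dependence, and can be solved exactly as in Lemma \ref{lemma:bsdeagent}. So the right order of operations is: first solve \eqref{BSDE:agent:sol}, then freeze $a^\star$, then treat the $N$ individual equations — which removes the apparent circularity coming from the presence of $Z^\lambda$ inside \eqref{BSDE:agent:i}.

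First I would prove existence, uniqueness and the identification $Y^i_0=U^i_0(\xi,a^\star)$. Writing the dynamics of $X$ under the equivalent measure $\mathbb P^{a^\star}$, namely $dX_s=b(s,X_s,a^\star_s)\,ds+\Sigma(s,X_s)\,dW^{a^\star}_s$, and substituting into \eqref{BSDE:agent:i}, the two occurrences of $b(s,X_s,a^\star_s)\cdot Z^i_s$ cancel and one is left with $Y^i_t=U^i_A(\xi^i)+\Gamma_i(X_T)-\int_t^T k^i(s,X_s,(a^\star_s)^{:,i})\,ds-\int_t^T Z^i_s\cdot\Sigma(s,X_s)\,dW^{a^\star}_s$. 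Since $a^\star$ is admissible, $\mathbb P^{a^\star}$ is equivalent to $\mathbb P$ with exponential moments, so under $\xi\in\mathcal C$, Assumption \textbf{(G)} on $\Gamma_i$ and Assumption \ref{assumption:bk} on $k^i$ the terminal datum lies in the appropriate integrability class; the martingale representation theorem under $\mathbb P^{a^\star}$ then yields a unique $Z^i\in\bigcap_{p\ge1}\mathcal H^p(\R^N)$, and taking $\mathbb P^{a^\star}$-conditional expectations (the stochastic integral being a true martingale by the $\mathcal H^p$ bound) gives $Y^i_t=\mathbb E^{\mathbb P^{a^\star}}\!\big[U^i_A(\xi^i)+\Gamma_i(X_T)-\int_t^T k^i\,ds\mid\mathcal F_t\big]$, in particular $Y^i_t\in\mathcal E(\R)$ and $Y^i_0=U^i_0(\xi,a^\star)$. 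This is the argument of Lemma \ref{lemma:bsdeagent}, applied to each agent's data instead of the aggregated data.

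For the aggregation identity I would argue by linearity and uniqueness. Still with $a^\star$ fixed, set $\bar Y:=\sum_i\lambda_i Y^i$ and $\bar Z:=\sum_i\lambda_i Z^i$; summing the reduced equations above with weights $\lambda_i$ and using the definitions of $U^\lambda_A,\Gamma^\lambda,k^\lambda$ gives $\bar Y_t=U^\lambda_A(\xi)+\Gamma^\lambda(X_T)-\int_t^T k^\lambda(s,X_s,a^\star_s)\,ds-\int_t^T\bar Z_s\cdot\Sigma(s,X_s)\,dW^{a^\star}_s$. On the other hand, since $a^\star_s\in\mathrm{argmax}_{a}\{b(s,X_s,a)\cdot Z^\lambda_s-k^\lambda(s,X_s,a)\}$ we have $f^\star(s,X_s,Z^\lambda_s,\lambda)=b(s,X_s,a^\star_s)\cdot Z^\lambda_s-k^\lambda(s,X_s,a^\star_s)$, so rewriting \eqref{BSDE:agent:sol} under $\mathbb P^{a^\star}$ and performing the same $dX$-cancellation produces exactly the same equation with $(\bar Y,\bar Z)$ replaced by $(Y^\lambda,Z^\lambda)$. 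The difference then solves a linear BSDE with zero terminal condition and zero generator under $\mathbb P^{a^\star}$, hence vanishes (taking conditional expectations for $Y$, then using invertibility of $\Sigma$ for $Z$), giving $Y^\lambda=\bar Y$ and $Z^\lambda=\bar Z$. Finally, the decomposition \eqref{caract:xi} is nothing but \eqref{BSDE:agent:i} evaluated at $t=0$ and solved for $U^i_A(\xi^i)$; inverting $U^i_A$ recovers $\xi^i$ itself.

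The hard part is not analytical but organisational: one must check that the frozen process $a^\star(\cdot,X_\cdot,Z^\lambda_\cdot,\lambda)$ genuinely belongs to $\mathcal A$ (so that $W^{a^\star}$ and $\mathbb P^{a^\star}$ are well defined with the exponential moments needed for $\mathcal E(\R)$), and that the integrability of $U^i_A(\xi^i)+\Gamma_i(X_T)-\int_0^T k^i\,ds$ is inherited from $\xi\in\mathcal C$ together with Assumptions \textbf{(G)} and \ref{assumption:bk}; these are precisely the hypotheses under which Lemma \ref{lemma:bsdeagent} and Theorem \ref{thm:pbPlanner} were established, so no new estimate is required and the proof is a repackaging of those arguments at the level of individual agents.
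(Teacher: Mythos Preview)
Your proposal is correct and follows essentially the same route as the paper: first fix $(Y^\lambda,Z^\lambda)$ from Theorem \ref{thm:pbPlanner} so that $a^\star$ becomes a frozen admissible control, then solve each individual BSDE \eqref{BSDE:agent:i} (now affine in $Z^i$) and identify $Y^i_0$ via Girsanov, and finally obtain the aggregation identity by showing that $\big(\sum_i\lambda_iY^i,\sum_i\lambda_iZ^i\big)$ and $(Y^\lambda,Z^\lambda)$ solve the same equation and invoking uniqueness. The paper's proof is terser (it simply cites the definitions of $\mathcal C,\mathcal A$ for well-posedness and the uniqueness of \eqref{BSDE:agent:sol} for the aggregation step), but the logical structure and the key observation---that freezing $a^\star$ linearises everything---are identical to yours.
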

\noindent In view of Proposition \ref{prop:caract} above, we have
$$Y_t^i=Y_0^i+\int_0^t  k^i(s,X_s, (a^{\star}(s,X_s,Z^\lambda_s, \lambda))^{:,i})ds+\int_0^t Z_s^i\cdot \Sigma(s,X_s)dW^\star_s, \; Y_T^i=U_A^i(\xi^i)+\Gamma_i(X_T). $$
This suggests to introduce a set of contracts $\Xi$ as the set of random variables $(U_A^i)^{(-1)}(Y_T^{i,Z^i,Y_0}-\Gamma_i(X_T))$ with $i\in\{1,\dots,N\}$ defined by
$$Y_t^{i,Z^i,Y_0}=Y_0^i+\int_0^t  k^i(s,X_s, (a^{\star}(s,X_s,Z^\lambda_s, \lambda))^{:,i})ds+\int_0^t Z_s^i\cdot \Sigma(s,X_s)dW^\star_s,\; Z^\lambda:=\sum_{i=1}^N \lambda_i Z^i$$ for any controls $((Z^i)_{1\leq i\leq N},Y_0)\in \mathbb D(\mathbb R^N)\times \left([R_0^i,+\infty)\right)_{1\leq i\leq N}$, where $\mathbb D(\mathbb R^N)$ denotes the set of controls $\mathbb F$-predictable processes $(Z^i)_{1\leq i\leq N} \in \mathcal H^2(\mathbb R^N)$ such that $(U_A^i)^{(-1)}(Y_T^{i,Z^i,Y_0}-\Gamma_i(X_T))\in \mathcal C$.

\noindent We set an $\mathbb F$-predictable process $Z$ with values in $\mathcal M_N(\mathbb R)$ such that its coefficient $Z^{i,j}$ is the $i$th element of the vector process $Z^j$. In this case we define the $\mathbb R^N$-valued process 
$$Y_t^{Z,Y_0}:=Y_0+\int_0^t  K(s,X_s, (a^{\star}(s,X_s,Z_s, \lambda)))ds+\int_0^t Z_s^{\top}\Sigma(s,X_s)dW^\star_s,$$
by setting (with an abuse of notation but justified by Proposition \ref{prop:caract}) \begin{equation}\label{def:astar}a^{\star}(s,X_s,Z_s, \lambda):= a^{\star}(s,X_s,Z^\lambda_s, \lambda)\end{equation}
with $Z^\lambda=\left(\sum_{i=1}^N \lambda_i Z^{j,i}\right)_{1\leq j\leq N}$ and $$K(s,X_s, (a^{\star}(s,X_s,Z_s, \lambda))):=(k^i(s,X_s, (a^{\star}(s,X_s,Z_s, \lambda))^{:,i}))^\top_{1\leq i\leq N}.$$
In this case, from Proposition \ref{prop:caract}, we get\footnote{The inclusion $\Xi\subset \mathcal C$ is in the definition of $\Xi$. The inclusion $\mathcal C\subset\Xi$ comes from \eqref{caract:xi} by using martingale representation.}
\begin{equation}\label{characterization}
\mathcal C=\Xi.
\end{equation}
In the following we will denote $\mathbb M_N(\mathbb R^N)$ the set of $\mathbb F$-predictable process $Z$ with values in $\mathcal M_N(\mathbb R)$ such that $Z^{:,i}\in \mathbb D(\mathbb R^N)$ for any $1\leq i\leq N$.
\subsection{The general problem of the Principal with an exogenous Planner}
Using the notation \eqref{def:astar}, we recall that the Principal solves
\begin{equation}\label{pb:primal:principal}
U_0^P=\sup_{\xi=(\xi^i)_{1\leq i\leq N} \in \mathcal C} \mathbb E^{\mathbb P^{a^\star(X, Z^\lambda,\lambda)}} \left[\mathcal U_P\left( \ell(X_T) - \sum_{i=1}^N \xi^i\right) \right],
\end{equation}
From Proposition \ref{prop:caract} and Characterization \eqref{characterization}, Problem \eqref{pb:primal:principal} becomes

\begin{equation}\label{pb:principal}
 U_0^P=\underset{Y^i_0\geq R_0^i} {\sup_{Z\in \mathbb M_N(\mathbb R^N)}}\mathbb E^{\mathbb P^{a^\star(X,Z,\lambda)}} \left[\mathcal U_P\left( \ell(X_T) - \sum_{i=1}^N (U_A^i)^{-1}(Y^{i,Z^{:,i},Y^i_0}_T-\Gamma_i(X_T))\right) \right].
\end{equation}
Since $\mathcal U_P$ and $(U_A^i)^{(-1)}$ are increasing, one can explicitly get the optimal $Y_0^i$ for any $i$ by saturating the reservation utility constrains of the Agents, \textit{i.e.}, $Y^{i,\star}_0=R_0^i$. Thus, Problem \eqref{pb:principal} becomes
\begin{align}\label{pb:principalsansY0}
 U_0^P&=\sup_{Z\in \mathbb M_N(\mathbb R^N)}\mathbb E^{\mathbb P^{a^\star(X, Z,\lambda)}} \left[\mathcal U_P\left( \ell(X_T) - \sum_{i=1}^N (U_A^i)^{(-1)}(Y^{i,Z^{:,i},R^i_0}_T- \Gamma_i(X_T))\right) \right].
\end{align}
\noindent As emphasized above and as an extension of \cite{sannikov2008continuous, cvitanic2014moral,cvitanic2017dynamic}, solving \eqref{pb:principalsansY0} remains to solve a stochastic control problem with 
\begin{itemize}
\item control variable: $Z\in \mathbb M_N(\mathbb R)$,
\item two state variables: the value of the firm $X$ and the value function of the $i$th Agent $Y^{i,Z^{:,i},R_0^i}$ for any $i\in \{1,\dots,N\}$.
\end{itemize}

\noindent We define $H^{Par}:[0,T]\times \mathbb R^N\times \mathbb R^N\times  \mathbb R^N\times \mathcal M_N(\mathbb R)\times \mathcal M_N(\mathbb R) \times \mathcal M_N(\mathbb R)\times (0,1)^N\longrightarrow \mathbb R$ by
\begin{align*}
H^{Par}(t,x,p,q,P,Q,R,\lambda)&:=\sup_{z\in \mathcal M_N(\mathbb R)}\Big\{ b(t,x, a^\star(t,x,z,\lambda))\cdot p+  K(t,x, (a^{\star}(t,x,z, \lambda)))\cdot q \\
&+\frac12 \text{Tr} \left( \Sigma(t,x) \Sigma(t,x)^\top P\right)+\frac12 \text{Tr} \left(z^\top \Sigma(t,x) \Sigma(t,x)^\top z Q\right)\\
&+\text{Tr}\left( \Sigma(t,x) \Sigma(t,x)^\top z R\right) \Big\}.
\end{align*}
We thus consider the following HJB equation associated with \eqref{pb:principalsansY0}
$$\textbf{(HJB-Par)}\begin{cases}
\displaystyle -\partial_t v(t,x,y)-H^{Par}(t,x,\nabla_x v,\nabla_y v,\Delta_{xx}v,\Delta_{xy}v,\Delta_{xy}v,\lambda)&=0,\; (t,x,y)\in [0,T)\times \R^N\times \R^N\\
v(T,x,y)=\mathcal U_P\left(\ell(x)-\sum_{i=1}^N (U_A^{i})^{(-1)}(y^i-\Gamma_i(x))\right).
\end{cases}$$
As usual, using a classical verification theorem (see for instance \cite{touzi2012optimal}) we get the following result\vspace{0.5em}

\begin{Theorem} Assume that PDE \textbf{(HJB-Par)} admits a unique solution $v$ continuously differentiable with respect to $t$ and twice continuously differentiable with respect to its spaces variables and that for any $t,x,p,q,P,Q,R,\lambda\in [0,T]\times \mathbb R^N\times \mathbb R^N\times  \mathbb R^N\times \mathcal M_N(\mathbb R)\times \mathcal M_N(\mathbb R) \times \mathcal M_N(\mathbb R)\times (0,1)^N$, the supremum in $H^{Par}(t,x,p,q,P,Q,R,\lambda)$ is attained for (at least) one $z^\star(t,x,p,q,P,Q,R,\lambda)$. Moreover, for any $Y^i_0\geq R_0^i, \; 1\leq i\leq N$, we assume that the following coupled SDE
$$\begin{cases}
\displaystyle X^\star_t&=x+\int_0^t \Sigma(s,X_s^\star)dW_s,\\
\displaystyle Y_t^{\star}&=R_0+\int_0^t  \left(K(s,X^\star_s, (a^{\star}(s,X^\star_s,z^{\star}_s, \lambda)))- (z^\star_s)^{\top} b(s,X^\star_s, (a^{\star}(s,X^\star_s,z^{\star}_s))\right)ds\\
\displaystyle&+\int_0^t (z^\star_s)^{\top}\Sigma(s,X^\star_s)dW_s,
\end{cases}$$
admits a unique solution $(X^\star,Y^\star)$ where 
$$z^\star_t:=z^\star(t,X^\star_t,\nabla_x v,\nabla_y v,\Delta_{xx}v,\Delta_{yy}v,\Delta_{xy}v,\lambda).$$
If moreover $(z^\star)^{:,i}\in \mathbb D(\mathbb R^N)$ then $$\xi:=\left((U_A^i)^{(-1)}((Y_T^{\star})^i-\Gamma_i(X^\star_T))\right)_{1\leq i\leq N}\in \mathcal C$$ is an optimal contract which solves the Principal problem \eqref{pb:primal:principal}, with
$$U_0^P=v(0,x,R_0).$$
 \end{Theorem}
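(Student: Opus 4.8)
The plan is to run the classical verification argument for the stochastic control problem \eqref{pb:principalsansY0}, whose state is the pair $(X,Y^{Z,R_0})$ and whose dynamics, under the measure $\mathbb P^{a^\star(X,Z,\lambda)}$ generated by the Planner's Pareto optimum, are
\begin{equation*}
dX_t = b(t,X_t,a^\star_t)\,dt + \Sigma(t,X_t)\,dW^\star_t,\qquad dY^{Z,R_0}_t = K(t,X_t,a^\star_t)\,dt + Z_t^{\top}\Sigma(t,X_t)\,dW^\star_t,
\end{equation*}
with $a^\star_t:=a^\star(t,X_t,Z_t,\lambda)$ in the sense of \eqref{def:astar}. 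The key observation is that the second-order generator of this diffusion, applied to a smooth function and evaluated at $z=Z_t$ together with the spatial derivatives of that function, is precisely the bracketed expression defining $H^{Par}$; this is the whole reason for the definition of $H^{Par}$ and of \textbf{(HJB-Par)}.

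First I would fix an arbitrary admissible control $Z\in\mathbb M_N(\mathbb R^N)$, set $Y_0^i=R_0^i$ (which is optimal since $\mathcal U_P$ and the $(U_A^i)^{(-1)}$ are increasing), and let $\xi=\big((U_A^i)^{(-1)}(Y^{i,Z^{:,i},R_0}_T-\Gamma_i(X_T))\big)_{1\leq i\leq N}\in\mathcal C$, this membership being exactly the content of $\mathcal C=\Xi$ in \eqref{characterization}. Applying It\^o's formula to $v(t,X_t,Y_t^{Z,R_0})$ under $\mathbb P^{a^\star(X,Z,\lambda)}$ and bounding its drift above by $\partial_t v + H^{Par}(t,X_t,\nabla_x v,\nabla_y v,\Delta_{xx}v,\Delta_{yy}v,\Delta_{xy}v,\lambda)=0$ (by \textbf{(HJB-Par)}), one gets that $v(\cdot,X_\cdot,Y_\cdot^{Z,R_0})$ is a local supermartingale; localizing and passing to the limit — legitimate thanks to the integrability built into $\mathbb D(\mathbb R^N)$, $\mathbb M_N(\mathbb R^N)$ and the standing exponential-moment assumption on $X$ — upgrades it to a true supermartingale, so that with the terminal condition of \textbf{(HJB-Par)},
\begin{equation*}
v(0,x,R_0)\ \geq\ \mathbb E^{\mathbb P^{a^\star(X,Z,\lambda)}}\big[v(T,X_T,Y_T^{Z,R_0})\big]\ =\ \mathbb E^{\mathbb P^{a^\star(X,Z,\lambda)}}\Big[\mathcal U_P\big(\ell(X_T)-\textstyle\sum_{i=1}^N\xi^i\big)\Big].
\end{equation*}
Taking the supremum over $Z$ gives $v(0,x,R_0)\geq U_0^P$.

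For the converse I would use the maximiser $z^\star$ of $H^{Par}$ (picked by a measurable selection) as a feedback: let $(X^\star,Y^\star)$ solve the coupled SDE in the statement. After changing measure to $\mathbb P^{a^\star(X^\star,z^\star,\lambda)}$ the extra $-z^{\star\top}b$ drift in $Y^\star$ is absorbed by the Girsanov drift, so $(X^\star,Y^\star)$ has exactly the dynamics displayed above with $Z_t=z^\star_t=z^\star(t,X^\star_t,\nabla_x v,\nabla_y v,\Delta_{xx}v,\Delta_{yy}v,\Delta_{xy}v,\lambda)$. Along this control the It\^o drift of $v(t,X^\star_t,Y^\star_t)$ equals $\partial_t v + H^{Par}(\dots)=0$ \emph{exactly}, so the process is a local martingale, and a true martingale by the assumption $(z^\star)^{:,i}\in\mathbb D(\mathbb R^N)$. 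Hence
\begin{equation*}
v(0,x,R_0)=\mathbb E^{\mathbb P^{a^\star(X^\star,z^\star,\lambda)}}\big[v(T,X^\star_T,Y^\star_T)\big]=\mathbb E^{\mathbb P^{a^\star(X^\star,z^\star,\lambda)}}\Big[\mathcal U_P\big(\ell(X^\star_T)-\textstyle\sum_{i=1}^N (U_A^i)^{(-1)}((Y^\star_T)^i-\Gamma_i(X^\star_T))\big)\Big]\leq U_0^P,
\end{equation*}
where $\xi:=\big((U_A^i)^{(-1)}((Y^\star_T)^i-\Gamma_i(X^\star_T))\big)_{1\leq i\leq N}\in\mathcal C$ by hypothesis and delivers each Agent the value $Y^{i,\star}_0=R_0^i$ by Proposition \ref{prop:caract}, so the reservation constraints \eqref{reservation:constrain} hold. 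Combining the two displays yields $U_0^P=v(0,x,R_0)$ together with the optimality of $\xi$.

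I expect the genuine difficulty to be the passage from local (super)martingale to true (super)martingale — that is, showing the stochastic integrals arising from It\^o's formula have zero expectation and that one may pass to the limit along the localizing sequence, \emph{uniformly} over the admissible controls $Z$ for the upper bound. This is exactly what the admissibility classes $\mathbb D(\mathbb R^N)$ and $\mathbb M_N(\mathbb R^N)$ (which force $Z\in\mathcal H^2$ and $\xi\in\mathcal C$), the BMO-type integrability of $z^\star$, and the exponential-moment assumption on $X$ are there to provide; the remaining bookkeeping — measurable selection of $z^\star$, the Girsanov computation identifying the two forms of the $Y$-dynamics, and the differentiability of $v$ needed to apply It\^o — is routine.
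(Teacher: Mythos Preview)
Your proposal is correct and is precisely the classical verification argument the paper invokes: the paper does not give a proof at all but simply writes ``As usual, using a classical verification theorem (see for instance \cite{touzi2012optimal}) we get the following result'' before stating the theorem. You have spelled out exactly that argument --- It\^o on $v(t,X_t,Y_t^{Z,R_0})$ under the controlled measure, the supermartingale inequality for arbitrary $Z$ via $\partial_t v + H^{Par}=0$, and the martingale equality along the maximizer $z^\star$ --- together with the correct identification of the localisation/integrability step as the only non-routine point.
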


\section{Application to a bidimensional linear-quadratic model}\label{section:application}

We compare the case where a Planner intervenes in the Stackelberg equilibrium between the Principal and the Agent with the case in which no-Planner acts in the linear-quadratic model developed in \cite{elie2016contracting}.

\subsection{The model and characterization of Pareto optima}
Let $a\in\mathcal M_2(A)$, $b: \mathcal M_2(\mathbb R) \longrightarrow \mathbb R^2$ and $k: \mathcal M_2 \longmapsto \mathbb R^2$.
$$dX_t= b(a_t) dt +dW_t^a$$
$a^{i,j}$ action of Agent $j$ on the project $i$ with
$$b(a)= \begin{pmatrix} a^{11}-a^{12}\\ a^{22}-a^{21}\end{pmatrix},\; k(a)=  \begin{pmatrix} \frac{k^{11}}2 |a^{11}|^2 + \frac{k^{21}}2 |a^{21}|^2\\  \frac{k^{22}}2 |a^{22}|^2 + \frac{k^{12}}2 |a^{12}|^2 \end{pmatrix}. $$
We also assume that their exists $\gamma_1,\gamma_2>0$ such that for any $i\in\{1,2\}$
$$\Gamma_i(x)=\Gamma_i\cdot x,\; x\in \mathbb R^2, \, j\neq i, $$
with $$\Gamma_1=\gamma(1, -1)^\top,\;  \Gamma_2=\gamma(-1, 1)^\top,\, \gamma\geq 0.$$
\noindent Then, for any parameter $\lambda=(\lambda_1,\lambda_2)\in (0,1)^2$ with $\lambda_1+\lambda_2=1$, we have
for any $z:=(z^{i,j})_{1\leq i,j\leq 2}\in \mathcal M_2(\mathbb R)$
$$a^{\star}(z,\lambda)=\begin{pmatrix}
\frac{\lambda_1 z^{11}+\lambda_2 z^{12}}{\lambda_1 k^{11}} &- \frac{\lambda_1 z^{11}+\lambda_2 z^{12}}{\lambda_2 k^{12}} \\
- \frac{\lambda_1 z^{21}+\lambda_2 z^{22}}{\lambda_1 k^{21}}  & \frac{\lambda_1 z^{21}+\lambda_2 z^{22}}{\lambda_2 k^{22}} 
\end{pmatrix}$$

\subsection{Problem of the Principal under the action of a general Planner} 
$$ U_0^P(\lambda):={\sup_{Z\in\mathbb M_2(\mathbb R)}} \, \mathbb E^{\mathbb P^{a^{\star}(Z,\lambda)}} \left[-e^{-R_P \left(X_T\cdot \mathbf 1_N- \xi^1-\xi^2 \right)}\right].$$
We get 
\begin{align*}
U_0^P(\lambda)&= {\sup_{Z\in \mathbb M_2(\mathbb R)}} \, \mathbb E^\star \left[-e^{-R_P \left(\int_0^T (b^1+b^2)(a^{\star}(Z,\lambda))ds +\int_0^T \mathbf 1_2\cdot  dW_s^\star - \int_0^T (k^1+k^2)(a^{\star}(Z,\lambda)) ds -\int_0^T(Z^{:,1}+Z^{:,2})\cdot dW_s^\star  \right)}\right]\\
&= {\sup_{Z\in \mathbb M_2(\mathbb R)}} \, \mathbb E^\star \left[-\mathcal E\left(-R_P\int_0^T(\mathbf 1_2 - (Z^{:,1}_s+Z^{:,2}_s)) \cdot dW_s^\star \right)e^{-R_P\int_0^T g(\lambda,Z_t)dt} \right]
\end{align*}
 where $\mathbb E^\star:=\mathbb E^{\mathbb P^{a^{\star}(Z,\lambda)}}$ and  $W^\star:= W^{a^{\star}(Z,\lambda)}$ and with, by setting $\overline \lambda :=1-\lambda$
\begin{align*}
g(\lambda, z)&=-\frac{R_P}2 \left( z^{11}+z^{12}-1\right)^2- \frac{R_P}2 \left( z^{22}+z^{21}-1\right)^2\\[0.8em]
&+\left( \frac{\lambda z^{11}+\overline \lambda z^{12}}{\lambda k^{11}}  +  \frac{\lambda z^{11}+\overline \lambda  z^{12}}{\overline \lambda  k^{12}} \right)+\left( \frac{\lambda z^{21}+\overline \lambda  z^{22}}{\overline \lambda  k^{22}}+  \frac{\lambda z^{21}+\overline \lambda  z^{22}}{\lambda k^{21}} \right)\\[0.8em]
& -\frac{k^{11}}2 \left| \frac{\lambda z^{11}+\overline \lambda  z^{12}}{\lambda k^{11}} \right|^2 -\frac{k^{21}}2 \left| \frac{\lambda z^{21}+\overline \lambda  z^{22}}{\lambda k^{21}} \right|^2 -\frac{k^{22}}2 \left|\frac{\lambda z^{21}+\overline \lambda  z^{22}}{\overline \lambda  k^{22}} \right|^2- \frac{k^{12}}2  \left|  \frac{\lambda z^{11}+\overline \lambda  z^{12}}{\overline \lambda  k^{12}}  \right|^2.
\end{align*} 
One can show after tedious but easy computations that $g$ is coercive in $z^1$ and $z^2$ and convexe. The first order conditions are then given by $$\textsc{(FOC)}\begin{cases}

R_P(z^{11}+z^{12}-1)+\left( \frac{\lambda z^{11}+\overline \lambda  z^{12}}{\lambda k^{11}}\right)+\frac{\lambda}{\overline \lambda  }\left( \frac{\lambda z^{11}+\overline \lambda  z^{12}}{\overline \lambda k^{12}}\right)&=\frac{1}{k^{11}} + \frac{\lambda}{\overline \lambda  k^{12}}\\
\left(\frac{\lambda}{\overline{\lambda}}-1\right)R_P(z^{11}+z^{12}-1)=0&\\
R_P(z^{22}+z^{21}-1)+\left( \frac{\lambda z^{21}+\overline \lambda  z^{22}}{\overline \lambda k^{22}}\right)+\frac{\overline \lambda }{\lambda}\left( \frac{\lambda z^{21}+\overline \lambda  z^{22}}{\lambda k^{21}}\right)&=\frac{1}{k^{22}} + \frac{\overline \lambda }{\lambda k^{21}}\\
\left(\frac{\lambda}{\overline{\lambda}}-1\right)R_P(z^{22}+z^{21}-1)&=0.

\end{cases}$$
This implies to distinguish $\lambda=\frac12$ or not.

\subsubsection{Pareto optima and Optimal contracts with an exogenous Planner}\label{application:exogeneous}
For any $\lambda\neq\frac12$ fixed by an exogenous Planner, (FOC) gives
 \begin{align*}
g(\lambda, z)&=\left( \frac{\lambda z^{11}+\overline \lambda z^{12}}{\lambda k^{11}}  +  \frac{\lambda z^{11}+\overline \lambda  z^{12}}{\overline \lambda  k^{12}} \right)+\left( \frac{\lambda z^{21}+\overline \lambda  z^{22}}{\overline \lambda  k^{22}}+  \frac{\lambda z^{21}+\overline \lambda  z^{22}}{\lambda k^{21}} \right)\\[0.8em]
& -\frac{k^{11}}2 \left| \frac{\lambda z^{11}+\overline \lambda  z^{12}}{\lambda k^{11}} \right|^2 -\frac{k^{21}}2 \left| \frac{\lambda z^{21}+\overline \lambda  z^{22}}{\lambda k^{21}} \right|^2 -\frac{k^{22}}2 \left|\frac{\lambda z^{21}+\overline \lambda  z^{22}}{\overline \lambda  k^{22}} \right|^2- \frac{k^{12}}2  \left|  \frac{\lambda z^{11}+\overline \lambda  z^{12}}{\overline \lambda  k^{12}}  \right|^2.
\end{align*}
By maximizing the function $g$, we get the following general Pareto optima conditions 

$$\begin{cases}
z^{11}_{Pe}&=\frac{|1-\lambda|^2k^{12}}{|\lambda|^2k^{11}+|1-\lambda|^2k^{12}}\\[0.8em]
z^{21}_{Pe}&= \frac{|1-\lambda|^2k^{22}}{|\lambda|^2k^{21}+|1-\lambda|^2k^{22}}
\\[0.8em]
z^{22}_{Pe}&= \frac{|\lambda|^2k^{21}}{|\lambda|^2k^{21}+|1-\lambda|^2k^{22}}\\
z^{12}_{Pe}&=\frac{|\lambda|^2k^{11}}{|\lambda|^2k^{11}+|1-\lambda|^2k^{12}}
\\[0.8em]
 $$a^{\star}(z_{Pe},\lambda)&=\begin{pmatrix}
\frac{\lambda z^{11}_{Pe}+(1-\lambda) z^{12}_{Pe}}{\lambda k^{11}} &- \frac{\lambda z^{11}_{Pe}+(1-\lambda) z^{12}_{Pe}}{(1-\lambda) k^{12}} \\
- \frac{\lambda z^{21}_{Pe}+(1-\lambda) z^{22}_{Pe}}{\lambda k^{21}}  & \frac{\lambda z^{21}_{Pe}+(1-\lambda) z^{22}_{Pe}}{(1-\lambda) k^{22}} 
\end{pmatrix}$$\\[0.8em]
U_0^P(\lambda)&=-e^{-R_PT g(\lambda, z_{Pe})}
\end{cases}$$

\noindent Thus, given a Pareto optimum of parameter $\lambda\in (0,1)\setminus \{\frac12\}$ chosen by the Planner, the optimal contracts given to Agent 1 and Agent 2 are respectively 
\begin{align*}
 \xi^{1,\star}&=R_0^1+\frac{T}2\left( \frac{\left|\lambda z^{11}_{Pe}+(1-\lambda) z^{12}_{Pe}\right|^2}{|\lambda|^2k^{11}} +  \frac{\left|\lambda z^{21}_{Pe}+(1-\lambda) z^{22}_{Pe}\right|^2}{|\lambda|^2 k^{21}}\right)\\
 &-T z^{11}_{Pe}\left(\frac{\lambda z^{11}_{Pe}+(1-\lambda) z^{12}_{Pe}}{\lambda k^{11}} +  \frac{\lambda z^{11}_{Pe}+(1-\lambda) z^{12}_{Pe}}{(1-\lambda) k^{12}}\right)\\
 &-T z^{21}_{Pe}\left(\frac{\lambda z^{21}_{Pe}+(1-\lambda) z^{22}_{Pe}}{(1-\lambda) k^{22} }+ \frac{\lambda z^{21}_{Pe}+(1-\lambda) z^{22}_{Pe}}{\lambda k^{21}} \right)\\
 &+(z^{11}_{Pe}-\gamma) X_T^1 +(z^{21}_{Pe}+\gamma)X_T^2.
 \end{align*}
 and
 \begin{align*}
 \xi^{2,\star}&=R_0^2+\frac{T}2\left( \frac{\left|\lambda z^{11}_{Pe}+(1-\lambda) z^{12}_{Pe}\right|^2}{|1-\lambda|^2k^{12}} +  \frac{\left|\lambda z^{21}_{Pe}+(1-\lambda) z^{22}_{Pe}\right|^2}{|1-\lambda|^2 k^{22}}\right)\\
 &-Tz^{12}_{Pe}\left(\frac{\lambda z^{11}_{Pe}+(1-\lambda) z^{12}_{Pe}}{\lambda k^{11}} +  \frac{\lambda z^{11}_{Pe}+(1-\lambda) z^{12}_{Pe}}{(1-\lambda) k^{12}}\right)\\
 &-Tz^{22}_{Pe}\left(\frac{\lambda z^{21}_{Pe}+(1-\lambda) z^{22}_{Pe}}{(1-\lambda) k^{22} }+ \frac{\lambda z^{21}_{Pe}+(1-\lambda) z^{22}_{Pe}}{\lambda k^{21}} \right)\\
 &+(z^{12}_{Pe}+\gamma) X_T^1 + (z^{22}_{Pe}-\gamma)X_T^2.
 \end{align*}

\noindent Notice that $g$ admits a maximum in $\lambda=\frac12$. We can deduce that the case $\lambda=\frac12$ coincides with a cooperative Planner with the Principal. 
\begin{Remark}\label{rem:weakpareto} In fact, notice that $\lambda\in(0,1)\longrightarrow g(\lambda,z_{Pe})$ has a continuous extension when $\lambda\to 0$ or $\lambda\to 1$, which corresponds to weak Pareto optima (see \cite[Theorem 3.1.1]{miettinen2012nonlinear}). In this case, we set
$$g(0):=g(0,z_{Pe})=\frac{1}{2k^{11}}+\frac{1}{2k^{21}} , \; g(1):=g(1,z_{Pe})= \frac{1}{2k^{22}}+ \frac{1}{2k^{12}}.$$
\end{Remark}
\subsubsection{Pareto optimum with a cooperative Planner}\label{pareto:cooperative:application}

It remains to the case where the Planner and the Principal cooperate by choosing $\lambda_1=\frac12=\lambda_2$. 

\noindent The first order conditions (FOC) induces the following Pareto optima conditions 

$$\begin{cases}
z^{11}_{Pc}+z^{12}_{Pc}&=1 \\
z^{22}_{Pc}+z^{21}_{Pc}&=1
\\[0.8em]
 $$a^{\lambda,\star}(z_{Pc})&=\begin{pmatrix}
\frac{1}{k^{11}} &- \frac{1}{ k^{12}} \\
- \frac{1}{k^{21}}  & \frac{1}{k^{22}} 
\end{pmatrix}$$\\
 U_0^P&= -e^{-R_P T g(0.5)},
\end{cases}$$
with $$g(0.5):=g(0.5,z^1_{Pc},z^2_{Pc})=\left( \frac{1}{ 2k^{11}}  +  \frac{1}{2 k^{12}} \right)+\left( \frac{ 1}{2k^{22}}+  \frac{1}{2k^{21}} \right).$$
\noindent 
We thus have the following optimal contracts for Agent 1 and Agent 2 respectively
\begin{align*}
 \xi^{1,\star}&= R_0^1+T\left( \frac{1}{2k^{11}}+ \frac{1}{2k^{21}}\right)-Tz^{11}_{Pc}\left(\frac{1}{k^{11}}+\frac{1}{k^{12}} \right)-Tz^{21}_{Pc}\left(\frac{1}{k^{21}}+\frac{1}{k^{22}} \right)\\
 &+(z^{11}_{Pc} -\gamma)X_T^1+(z^{21}_{Pc}+\gamma)X_T^2,  \end{align*}
and
\begin{align*}
 \xi^{2,\star}&= R_0^2+T\left( \frac{1}{2k^{12}}+ \frac{1}{2k^{22}}\right)-Tz^{12}_{Pc}\left(\frac{1}{k^{11}}+\frac{1}{k^{12}} \right)-Tz^{22}_{Pc}\left(\frac{1}{k^{21}}+\frac{1}{k^{22}} \right)\\
 &+(z^{12}_{Pc}+\gamma) X_T^1+(z^{22}_{Pc}-\gamma)X_T^2.  \end{align*}
\paragraph{Interpretations.}
\begin{itemize}\item[$(i)$] We find the first-best effort cf \cite[Theorem 3.3.]{elie2016contracting} which is both a Nash equilibrium in the first best model and a Pareto optimum with a cooperative Planner.
\item[$(ii)$] We get in the case of a cooperative Planner infinitely number of optimal contracts which differs clearly form the classical case and results in \cite{elie2016contracting}.
\item[$(iii)$]  If $k^{11}\leq k^{22}$ and $k^{21}\leq k^{12}$, \textit{i.e.} Agent $1$ is more efficient that Agent $2$ to manage his project and to help Agent $2$ to manage his project. Take $z^{11}_{Pc}=z^{22}_{Pc}=1$ each Agent strictly will receive the value of his own project. Fixed part is greater for Agent $1$ than for Agent $2$. 
\item[$(iv)$]  If $k^{12}\geq k^{11}$, \textit{i.e.} Agent $1$ is more efficient with project $1$ than Agent $2$, the part of the project $1$ given to Agent $1$ is greater than the part of this project given to Agent $2$ and depend of the relative performance of this Agent compared to the others. We get similar results with project $2$.
\item[$(v)$] Notice that for any Pareto coefficients, the optimal effort of each Agent is always a booster of each project. At the equilibrium, each Agent does not penalize the project of the others. This phenomenon indeed appears as soon as the appetence coefficients are different for Agent 1 and Agent 2 as emphazised in \cite{elie2016contracting}.
\item[$(vi)$] Finally, notice that at the optimum, the appetence parameter $\gamma$ does not impact the value function of the Principal and optimal efforts. More exactly, in this model the Principal proposes optimal contracts to the Agents which cancelled their ambitious in their works and such that ambitions of the Agents have no impact on the value of the Principal. Besides, we recover the suppression of the appetence for competitions of the Agent by the Principal since she penalizes each $i$th Agent with the amount $-\gamma(X_T^i- X_T^j)$, $i\neq j\in \{1,2\}$, as in \cite{elie2016contracting}.
\end{itemize}

\subsubsection{Comparison with the no-Planner model}
We now turn to a comparison between a model in which a Planner manages the effort of the Agents and a model in which Agents are rational entities who aim at finding a Nash equilibrium. Using \cite{elie2016contracting} and results of Section \ref{section:noplanner}, the Nash equilibrium is given for any $z\in \mathcal M_2(\mathbb R)$ by
$$ e^\star(z)= \begin{pmatrix}\frac{z^{11}}{k^11} &- \frac{z^{12}}{k^{12}}\\
-\frac{z^{21}}{k^{21}} & \frac{z^{22}}{k^{22}} \end{pmatrix}$$
The problem of the Principal is then to solve
$$ U_0^{P,NA}:={\sup_{\xi^1,\xi^2\in \mathcal C}} \, \mathbb E^\star \left[-e^{-R_P \left(X_T\cdot \mathbf 1_N- \xi^1-\xi^2 \right)}\right].$$
We get
\begin{align*}
U_0^P&= {\sup_{Z}} \, \mathbb E^\star \left[-\mathcal E\left(-R_P\int_0^T( \mathbf 1_2- (Z^1_s+Z^2_s)) \cdot dW_s^\star \right)e^{-R_P\int_0^T g_{NA}(Z_t,R_P)dt} \right]
\end{align*}
with
\begin{align*}
g_{NA}(z,R_P)&=-\frac{R_P}2 \left( z^{11}+z^{12}-1\right)^2- \frac{R_P}2 \left( z^{22}+z^{21}-1\right)^2\\
&+\left( \frac{ z^{11}}{ k^{11}}  +  \frac{z^{12}}{ k^{12}} +\frac{z^{21}}{k^{21}}+  \frac{z^{22}}{k^{22}} \right) -\frac{k^{11}}2 \left| \frac{ z^{11}}{k^{11}} \right|^2 -\frac{k^{21}}2 \left| \frac{z^{21}}{ k^{21}} \right|^2 -\frac{k^{22}}2 \left|\frac{z^{22}}{k^{22}} \right|^2- \frac{k^{12}}2  \left|  \frac{z^{12}}{ k^{12}}  \right|^2.
\end{align*}
Noticing that $g^{NA}$ is coercive and convexe, one get from the first order conditions the following solutions
$$\begin{cases}
z^{11}_{NA}=\frac{1+ R_Pk^{12}}{1+ R_P(k^{12}+k^{11})}\\[0.3em]
z^{12}_{NA}=\frac{1+R_Pk^{11}}{1+ R_P(k^{12}+k^{11})}\\[0.3em]
z^{22}_{NA}=\frac{1+ R_Pk^{21} }{1+ R_P(k^{21}+k^{22})}\\[0.3em]
z^{21}_{NA}=\frac{1+R_Pk^{22}}{1+ R_P(k^{21}+k^{22})}\\[0.3em]
$$ e^\star(z)= \begin{pmatrix}\frac{z^{11}_{NA}}{k^{11}} &- \frac{z^{12}_{NA}}{k^{12}}\\
-\frac{z^{21}_{NA}}{k^21} & \frac{z^{22}_{NA}}{k^{22}} \end{pmatrix}$$\\[0.3em]
U_0^{P,NA}= -e^{-R_P T g_{NA}(z,R_P)}
\end{cases}$$

In this case, optimal contracts are given by
\begin{align*}
\xi^{1,\star}&= R_0^1+T\left( \frac{1}{2k^{11}}+ \frac{1}{2k^{21}}\right)-Tz^{11}_{NA}\left(\frac{z^{11}_{NA}}{k^{11}}+\frac{z^{12}_{NA}}{k^{12}} \right)-Tz^{21}_{NA}\left(\frac{z^{21}_{NA}}{k^{21}}+\frac{z^{22}_{NA}}{k^{22}} \right)\\
&+(z^{11}_{NA}-\gamma) X_T^1+(z^{21}_{NA}+\gamma)X_T^2
\end{align*}
and
\begin{align*}\xi^{2,\star}&= R_0^2+T\left( \frac{1}{2k^{12}}+ \frac{1}{2k^{22}}\right)-Tz^{12}_{NA}\left(\frac{z^{11}_{NA}}{k^{11}}+\frac{z^{12}_{NA}}{k^{12}} \right)-Tz^{22}_{NA}\left(\frac{z^{21}_{NA}}{k^{21}}+\frac{z^{22}_{NA}}{k^{22}} \right)\\
&+(z^{12}_{NA}+\gamma) X_T^1+(z^{22}_{NA}-\gamma)X_T^2.
\end{align*}
\noindent The result below gives necessary and sufficient conditions for which the Nash equilibrium coincides with a Pareto equilibrium.

\begin{Proposition}[Suffisant and necessary conditions] \label{prop:nashpareto}Assume that $A$ is convex. 
The Nash equilibrium $e^\star$ is a Pareto optimum if and only if there exists $\lambda^\star\in (0,1)$ such that
$$\begin{cases}
\displaystyle\frac{1+ R_Pk^{12}}{1+ R_P(k^{12}+k^{11})}=\frac{\lambda^\star z^{11}_{Pe}+(1-\lambda^\star) z^{12}_{Pe}}{\lambda^\star}\\ \vspace{0.5em}
\displaystyle\frac{1+R_Pk^{22} }{1+ R_P(k^{21}+k^{22})}= \frac{\lambda^\star z^{21}_{Pe}+(1-\lambda^\star) z^{22}_{Pe}}{\lambda^\star}\\ \vspace{0.5em}
\displaystyle\frac{1+R_P k^{12}}{1+R_P k^{11}}=\frac{1+R_P k^{22}}{1+R_P k^{21}}=\frac{1-\lambda^\star}{\lambda^\star}.
\end{cases}$$
In this case $e^\star$ coincides with Pareto optimum with parameter $\lambda^\star$.\end{Proposition}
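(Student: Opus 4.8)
The idea is to reduce the assertion to a matrix identity between the Nash effort $e^\star(z_{NA})$ and the Pareto effort $a^\star(z_{Pe},\lambda^\star)$, and then to solve that identity entry by entry. First, by Proposition~\ref{prop:pareto}, for every $\lambda\in(0,1)$ the matrix $a^\star(z_{Pe},\lambda)$ computed in Section~\ref{application:exogeneous} is Pareto optimal — it is the maximiser of the scalarised problem \eqref{pb:multiojective:weight}, unique because $g(\lambda,\cdot)$ is strictly convex and coercive. Conversely, since $A$ is convex and the multi-objective problem is convex in this model (the drift $b$ is linear in $a$, the costs $k^i$ are convex, and in the weak formulation the remaining contribution of the criterion is an expectation of a concave function of an affine function of $a$), the converse of the weighting characterisation in \cite{miettinen2012nonlinear} shows that every strict Pareto optimum maximises $\sum_i\lambda_i U_0^i$ for some weight vector with all components positive. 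The degenerate weights $\lambda\in\{0,1\}$ yield only the weak Pareto optima of Remark~\ref{rem:weakpareto}, whose effort matrices have a vanishing column, whereas all four entries of $e^\star(z_{NA})$ are nonzero (each $z^{ij}_{NA}\in(0,1)$); hence such boundary weights cannot reproduce $e^\star$. Consequently ``$e^\star$ is a Pareto optimum'' is equivalent to the existence of $\lambda^\star\in(0,1)$ such that $e^\star(z_{NA})=a^\star(z_{Pe},\lambda^\star)$.

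It then remains to solve this $2\times 2$ identity. Plugging in the explicit forms of $e^\star(z_{NA})$, of $a^\star(\cdot,\lambda^\star)$ and of $z_{Pe}$, the $(1,1)$ and $(2,1)$ entries give exactly the first two displayed equations, namely $z^{11}_{NA}=\bigl(\lambda^\star z^{11}_{Pe}+(1-\lambda^\star)z^{12}_{Pe}\bigr)/\lambda^\star$ and the analogous equation for the second row. Dividing the $(1,1)$ entry by the $(1,2)$ entry and the $(2,1)$ entry by the $(2,2)$ entry gives $z^{11}_{NA}/z^{12}_{NA}=z^{21}_{NA}/z^{22}_{NA}=(1-\lambda^\star)/\lambda^\star$, and substituting $z^{11}_{NA}/z^{12}_{NA}=(1+R_Pk^{12})/(1+R_Pk^{11})$ and $z^{21}_{NA}/z^{22}_{NA}=(1+R_Pk^{22})/(1+R_Pk^{21})$ yields the third displayed line. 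Conversely, the first two equations together with these two ratios reconstruct the $(1,2)$ and $(2,2)$ entries, so the four entrywise identities are equivalent to the three displayed conditions; and when they hold, $e^\star=a^\star(z_{Pe},\lambda^\star)$ by construction, i.e. $e^\star$ is the Pareto optimum of parameter $\lambda^\star$.

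The computations of the second paragraph are routine — the linear-quadratic structure makes every sensitivity process constant, so all objects are explicit. The only genuinely delicate point is the reduction in the first paragraph: invoking the \emph{converse} of the scalarisation principle, which is precisely where the hypothesis that $A$ be convex (together with the concavity in $a$ of the agents' criteria, via the weak formulation) is used, and the elementary check that a boundary weight $\lambda^\star\in\{0,1\}$ cannot produce $e^\star$.
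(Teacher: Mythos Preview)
Your proof is correct and follows essentially the same route as the paper: first use convexity of $A$ together with the linear--quadratic structure to invoke the converse of the scalarisation principle (the paper cites \cite[Theorem 3.1.7]{miettinen2012nonlinear}), so that Pareto optimality of $e^\star$ is equivalent to $e^\star(z_{NA})=a^\star(z_{Pe},\lambda^\star)$ for some $\lambda^\star\in(0,1)$; then match the four entries of the effort matrices and reduce the resulting four equations to the three displayed in the statement by taking ratios. The only point where the paper is slightly more careful is the concavity of $a\longmapsto U_0^i(\xi,a)$: since the probability $\mathbb P^a$ itself depends on $a$, your phrasing ``an expectation of a concave function of an affine function of $a$'' is not quite accurate, and the paper instead obtains concavity via comparison of BSDEs (see \cite{briand2008quadratic}), which is the clean way to handle the measure change.
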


\begin{proof}
Since $A$ is convex, we notice that $a\longmapsto U_0^i(\xi,a)$ is concave since $b$ is linear and $k$ is convex with respect to $a$ and by using comparison of BSDEs (see \cite{briand2008quadratic} for instance). Thus, finding a Pareto equilibrium is equivalent to find $\lambda^\star$ such that their exists a solution of the multi-objective problem from \cite[Theorem 3.1.7]{miettinen2012nonlinear}.\vspace{0.3em}

\noindent Now, a Nash equilibrium $e^\star$ is Pareto optimal if and only if there exists $\lambda^\star\in (0,1)$ such that
$$\begin{cases}
\displaystyle\frac{1+ R_Pk^{12}}{1+ R_P(k^{12}+k^{11})}=\frac{\lambda^\star z^{11}_{Pe}+(1-\lambda^\star) z^{12}_{Pe}}{\lambda^\star}\\ \vspace{0.5em}
\displaystyle\frac{1+ R_Pk^{11}}{1+ R_P(k^{12}+k^{11})}=\frac{\lambda^\star z^{11}_{Pe}+(1-\lambda^\star) z^{12}_{Pe}}{(1-\lambda^\star)}\\
\vspace{0.5em}
\displaystyle\frac{1+R_Pk^{22} }{1+ R_P(k^{21}+k^{22})}= \frac{\lambda^\star z^{21}_{Pe}+(1-\lambda^\star) z^{22}_{Pe}}{\lambda^\star}\\ \vspace{0.5em}
\displaystyle\frac{1+R_Pk^{21} }{1+ R_P(k^{21}+k^{22})}= \frac{\lambda^\star z^{21}_{Pe}+(1-\lambda^\star) z^{22}_{Pe}}{1-\lambda^\star}.\end{cases}$$
By rewriting these conditions, we get the result.
\end{proof}
\paragraph{Toy models in which Nash equilibrium are Pareto optimal.} \begin{itemize}
\item The Nash equilibrium is Pareto efficient with $\lambda^\star=\frac12$ if and only if $R_P=0$ (the Principal is risk-neutral).
\item Assume that $k^{11}=k^{21}$ and $k^{22}=k^{12}$, \textit{i.e.} the cost of the effort of the Agent $i$ is the same towards the project $i$ or the project $j$. In this case, we necessarily have
$$\lambda^\star=  \frac{1+R_Pk^{11}}{2+R_P(k^{11}+k^{22})}\in (0,1).$$ Then, the Nash equilibrium $e^\star(\xi)$ coincides with a Pareto optimum with parameter $\lambda^\star$ if
and only if $$\frac{1+ R_Pk^{22}}{1+ R_P(k^{22}+k^{11})}=\frac{ (1-\lambda^\star)^2k^{22}+(1-\lambda^\star)\lambda^\star k^{11}}{|\lambda^\star|^2k^{11}+ (1-\lambda^\star)^2 k^{22}}. $$
\end{itemize}

\noindent We now turn to a last interesting results remaining to say that the Principal can benefit from the action of a Planner to broke the Stackelberg game between her and her Agents. \vspace{0.5em}
\begin{Proposition}[Advantage of adding a Planner]\label{advantage:prop}
There exists a (non-empty) set of parameters $\Lambda\subset (0,1)$ such that $U_0^{P , NA}\leq U_0^P\left(\lambda\right),$ for any $\lambda\in \Lambda$, \textit{i.e.} adding a Planner can improve the value of the Principal.\vspace{0.3em}

\noindent Moreover,  $U_0^{P , NA}>U_0^P\left(0\right)$ and $U_0^{P , NA}>U_0^P\left(1\right)$. In other words, a Planner choosing weak Pareto optima does not improve the value function of the Principal compared to a no-Planner model.

\end{Proposition}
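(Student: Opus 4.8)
The plan is to trade the whole statement for two inequalities between the exponents appearing in the Principal's value functions, using that $x\mapsto-e^{-R_PTx}$ is strictly increasing. Recall from the computations preceding Proposition \ref{prop:nashpareto} that $U_0^{P,NA}=-e^{-R_PT\,g_{NA}(z_{NA},R_P)}$, where $z_{NA}$ is the unique maximizer of the separable, strictly concave, coercive quadratic $z\mapsto g_{NA}(z,R_P)$, and that for $\lambda\in(0,1)$ one has $U_0^P(\lambda)=-e^{-R_PT\,g(\lambda,z_{Pe}(\lambda))}$, with $g(0),g(1)$ as in Remark \ref{rem:weakpareto}. Hence $U_0^{P,NA}\le U_0^P(\lambda)$ is equivalent to $g_{NA}(z_{NA},R_P)\le g(\lambda,z_{Pe}(\lambda))$, while the two strict inequalities in the second assertion are equivalent to $g_{NA}(z_{NA},R_P)>g(0)$ and $g_{NA}(z_{NA},R_P)>g(1)$. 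Throughout I work under the standing assumption $R_P>0$ (the risk-averse Principal), which is in fact needed for the strict inequalities.

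For the first assertion I would take $\lambda=\tfrac12$ and compare with the cooperative value $g(0.5)=\tfrac1{2k^{11}}+\tfrac1{2k^{12}}+\tfrac1{2k^{22}}+\tfrac1{2k^{21}}$ computed in \S\ref{pareto:cooperative:application}. Setting $h_k(\zeta):=\tfrac\zeta k-\tfrac{\zeta^2}{2k}$, an elementary one-variable computation gives $\max_{\zeta\in\R}h_k(\zeta)=h_k(1)=\tfrac1{2k}$, and since $g_{NA}(z,R_P)=h_{k^{11}}(z^{11})+h_{k^{12}}(z^{12})+h_{k^{21}}(z^{21})+h_{k^{22}}(z^{22})-\tfrac{R_P}2(z^{11}+z^{12}-1)^2-\tfrac{R_P}2(z^{21}+z^{22}-1)^2$, one obtains the pointwise bound $g_{NA}(z,R_P)\le g(0.5)$ for every $z$, the two squared terms being nonpositive. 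Equality in the summand bound forces $z^{11}=z^{12}=z^{21}=z^{22}=1$, which makes both $R_P$-squares strictly negative; hence the bound is strict, $g_{NA}(z_{NA},R_P)<g(0.5)$, so $U_0^{P,NA}<U_0^P(\tfrac12)$ and $\tfrac12\in\Lambda$. Since $\lambda\mapsto g(\lambda,z_{Pe}(\lambda))$ is continuous on $(0,1)$ and equals $g(0.5)$ at $\tfrac12$ (where, as noted after \S\ref{pareto:cooperative:application}, it attains its maximum), there is $\delta>0$ with $g(\lambda,z_{Pe}(\lambda))>g_{NA}(z_{NA},R_P)$ for all $\lambda\in\Lambda:=(\tfrac12-\delta,\tfrac12+\delta)$, a non-empty open set, which proves the first claim.

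For the second assertion I would simply evaluate $g_{NA}(\cdot,R_P)$ at two well-chosen feasible points. At $z^\ast=(z^{11},z^{12},z^{21},z^{22})=(1,0,1,0)$ both $R_P$-squares vanish and $g_{NA}(z^\ast,R_P)=h_{k^{11}}(1)+h_{k^{12}}(0)+h_{k^{21}}(1)+h_{k^{22}}(0)=\tfrac1{2k^{11}}+\tfrac1{2k^{21}}=g(0)$; at $z^{\ast\ast}=(0,1,0,1)$ likewise $g_{NA}(z^{\ast\ast},R_P)=\tfrac1{2k^{12}}+\tfrac1{2k^{22}}=g(1)$. Because $R_P>0$ neither $z^\ast$ nor $z^{\ast\ast}$ is the unique maximizer $z_{NA}$ (indeed $z^{11}_{NA}=\tfrac{1+R_Pk^{12}}{1+R_P(k^{11}+k^{12})}\neq1$), so strict concavity gives $g_{NA}(z_{NA},R_P)>g(0)$ and $g_{NA}(z_{NA},R_P)>g(1)$, whence $U_0^{P,NA}>U_0^P(0)$ and $U_0^{P,NA}>U_0^P(1)$ by monotonicity of $x\mapsto-e^{-R_PTx}$.

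The one point I would be most careful about, and the only non-routine step, is the continuity of $\lambda\mapsto g(\lambda,z_{Pe}(\lambda))$ near $\tfrac12$ used to upgrade $\tfrac12\in\Lambda$ to an open $\Lambda$: the closed forms for $z_{Pe}(\lambda)$ in \S\ref{application:exogeneous} are rational in $\lambda$ with denominators $\lambda$, $1-\lambda$, $|\lambda|^2k^{11}+|1-\lambda|^2k^{12}$ and its analogue, all bounded away from $0$ for $\lambda$ near $\tfrac12$, so continuity on a neighbourhood of $\tfrac12$ is immediate; one then checks that the induced effort $a^\star(z_{Pe}(\lambda),\lambda)$ converges as $\lambda\to\tfrac12$ to the cooperative effort $a^{\lambda,\star}(z_{Pc})$, so that the value extends continuously to $g(0.5)$ at $\tfrac12$. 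Everything else reduces to elementary calculus on the separable quadratic $g_{NA}$ and the monotonicity of the exponential, so no further obstacle is expected.
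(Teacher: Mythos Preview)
Your proof is correct. The first assertion is handled exactly as in the paper: both you and the authors bound $g_{NA}(z_{NA},R_P)$ above by $g(0.5)$ and then invoke continuity of $\lambda\mapsto g(\lambda,z_{Pe}(\lambda))$ to obtain a non-degenerate set $\Lambda$ containing $\tfrac12$. Your treatment is in fact a bit more explicit (the decomposition via $h_k(\zeta)=\zeta/k-\zeta^2/(2k)$ makes the pointwise inequality transparent, and you record strictness, which the paper does not bother with).

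For the second assertion your route is genuinely different and simpler than the paper's. The authors argue by monotonicity in the risk-aversion parameter: they claim (after ``a tedious computation'') that $R\mapsto g_{NA}(z_{NA}(R),R)$ is decreasing, compute the explicit limit as $R\to+\infty$, and then check that this limit still exceeds both $g(0)$ and $g(1)$. You bypass all of this by evaluating $g_{NA}(\cdot,R_P)$ at the two test points $z^\ast=(1,0,1,0)$ and $z^{\ast\ast}=(0,1,0,1)$, where the $R_P$-penalty squares vanish and the value equals exactly $g(0)$, respectively $g(1)$; strict concavity of $g_{NA}$ (the Hessian is negative definite once $R_P>0$ and the $k^{ij}$ are positive) and the observation that $z_{NA}\neq z^\ast,z^{\ast\ast}$ then give the strict inequalities directly. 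Your argument is shorter, avoids the limit computation, and makes the role of the hypothesis $R_P>0$ explicit (it is needed for strict concavity in the coupled directions and to ensure $z_{NA}\neq z^\ast,z^{\ast\ast}$). The paper's approach, on the other hand, yields the extra qualitative information that the no-Planner value deteriorates monotonically with the Principal's risk aversion, which is used in the commentary around Figure~1; your proof does not deliver this by-product but is fully sufficient for the proposition as stated.
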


\begin{proof}
For any $z\in \mathcal M_2(\mathbb R)$, notice that $g_{NA}(z_{NA},R_P)\leq g(0.5)$. Thus,
$$U_0^{P NA}= -e^{-R_P T g_{NA}(z,R_P)}\leq -e^{-R_Pg(0.5)}=U_0^P\left(\frac12\right).$$ Thus, by using the continuity with respect to $\lambda$ of $\lambda\longmapsto -e^{-R_Pg(\lambda)}$, we deduce that there exists a set $\Lambda \subset (0,1)$ such that $\frac12 \in \Lambda$ and $U_0^{P NA}\leq U_0^P(\lambda)$ for any $\lambda\in \Lambda$.\vspace{0.5em}

\noindent A tedious computation shows that $R>0\longmapsto g_{NA}(z_{NA},R)$ is decreasing, and 
\begin{align*}
\lim_{R\to+\infty}g_{NA}(z_{NA},R)&=\frac{\frac{k^{12}}{k^{11}+k^{12}}}{k^{11}}\left(1-\frac{k^{12}}{2(k^{11}+k^{12})} \right) + \frac{\frac{k^{11}}{k^{11}+k^{12}}}{k^{12}}\left(1-\frac{k^{11}}{2(k^{11}+k^{12})} \right)\\
&+\frac{\frac{k^{22}}{k^{22}+k^{21}}}{k^{21}}\left(1-\frac{k^{22}}{2(k^{22}+k^{21})} \right) + \frac{\frac{k^{21}}{k^{22}+k^{21}}}{k^{22}}\left(1-\frac{k^{21}}{2(k^{22}+k^{21})} \right)\end{align*} Thus, by using Remark \ref{rem:weakpareto}, easy computations directly give
$\lim_{R\to+\infty}g_{NA}(z_{NA},R)> \frac{1}{2k^{11}} +\frac{1}{2k^{21}}=g(0) $ and $\lim_{R\to+\infty}g_{NA}(z_{NA},R)>\frac{1}{2k^{12}} +\frac{1}{2k^{22}}=g(1).$ 
\end{proof}
\noindent We illustrate this proposition with the Figure 1 below. Notice that $\lambda\longmapsto U_0^P(\lambda)$ has the same monotonicity than $\lambda \longmapsto g(\lambda,z_{Pe\text{ or }Pc})$. Thus comparing $g(\lambda,z_{Pe\text{ or }Pc})$ with $g_{NA}(R,z_{NA})$ is enough to compare the value functions of the Principal with and without a Planner. We notice in particular that the length of the set $\Lambda$ is decreasing with the risk aversion of the Principal. 
\begin{figure}[H]\caption{Comparison between $l\in (0,1)\longmapsto g(l,z_{Pe\text{ or }Pc})$ (see the blue curve and the blue point for $l=\frac12$) and $g_{NA}(R,z_{NA})$ for $R\in \{0.1,0.25,0.5,1,50\}$ (see the horizontal red curves) by adding the weak Pareto optima for $l\in \{0,1\}$ (see the green points). Cost Parameters chosen are $k^{11}=2, \, k^{22}=5, \, k^{12}=1,\, k^{21}=10.$ In this case for any $R>0$, we have $g_{NA}(z_{NA}, R)\geq \lim\limits_{R\to+\infty}g_{NA}(z_{NA}, R)=0.7>\text{ max }\left(g(0),g(1)\right)=g(1)=0.6$. Thus, the set of Pareto optima $\Lambda$ ensuring a better value for the Principal compared to the case without adding a Planner is composed by all the $\lambda\in (0,1)$ such that the blue curve is above the red curve with corresponding risk-aversion parameter $R>0$.}
\centering \includegraphics[scale=0.6]{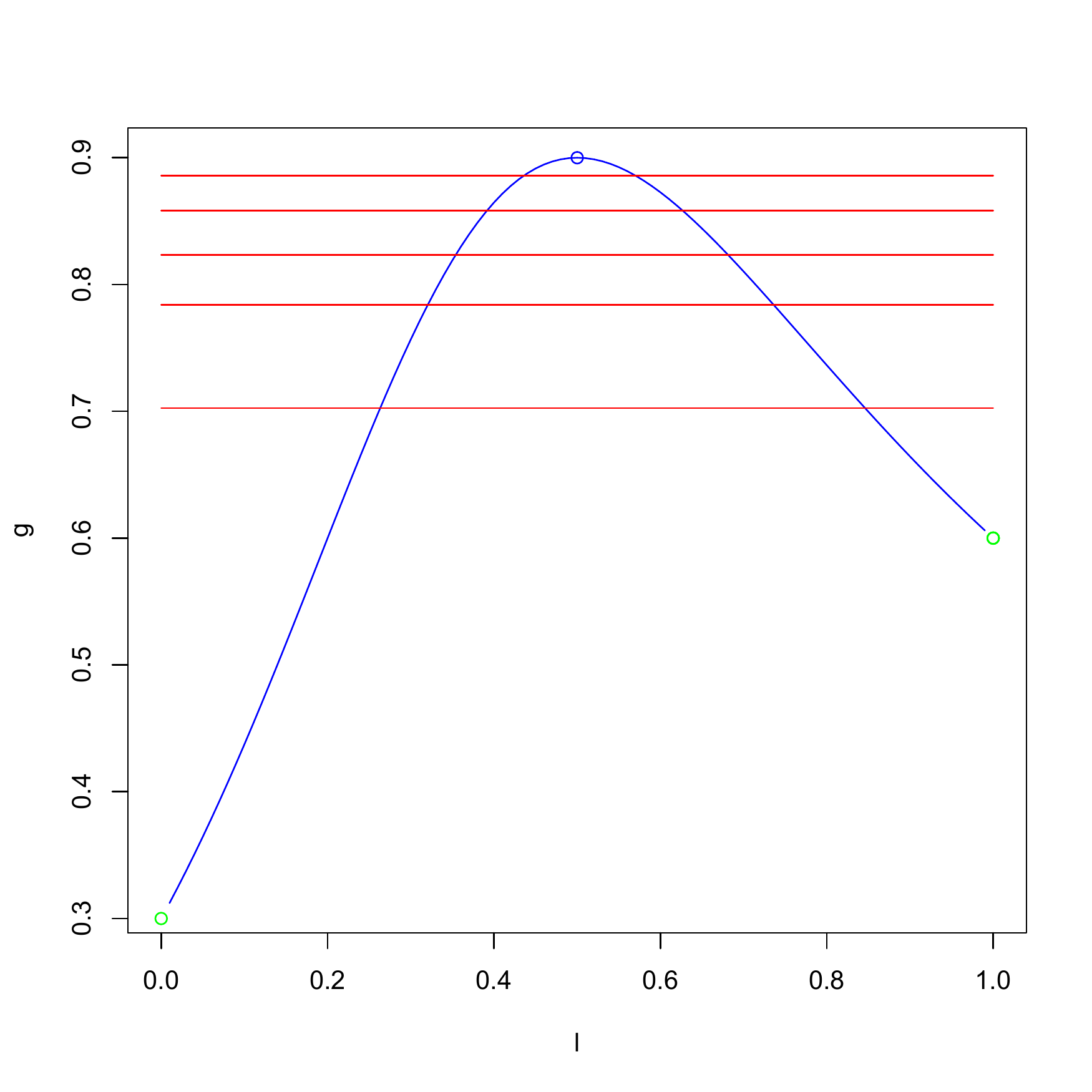}
\end{figure}
\section{Conclusion}
In this paper, we have proved that in a linear-quadratic model there exists configurations such that Nash equilibria are Pareto efficient (see Proposition \ref{prop:nashpareto}). We also have seen that it is sometimes advantageous from the Principal point of view, to add a Planner in the model who imposes Pareto optimal actions to the Agents, compared to the case in which the Agents are rational, non-cooperative, and compute themselves their best-reaction effort (see Proposition \ref{advantage:prop}). On the contrary, if the Planner chooses a weak Pareto optimum, then the value of the Principal is always worse than a no-Planner model. We would like to propose some extensions to this work for the path of future researches which might also be worth investigating.\vspace{0.3em}

\noindent First extension will consist in the studying of classical exponential utilities for the Agents. The main issue of this work is indeed that we assume that Agents are risk neutral in the application in order to make all the computations. In this case, we do not fit with separable utilities and the scalarization method proposed to solve the problem does not work. An other approach which could be considered should be to use other methods to find Pareto optima, see for instance \cite{miettinen2012nonlinear}, more tractable for exponential utilities.\vspace{0.3em}

\noindent The second extension is directly linked to Remark \ref{remak:choix} since the natural question which is not considered in the present work is  to wonder what is happening if the Planner is not exogenous in the sense that he is also hired by the Principal. In this case, the Planner has to be seen as a \textit{consulting mediator} which aims at finding Pareto optima and which is remunerated by the Principal. This required to modify the classical Stackelberg-type approach by adding this third player in the game. 

\section{Acknowlegments}
The author is very grateful to Fran\c{c}ois Delarue for having suggested to do this study during a discussion and for his advices together with Dylan Possama\"i for his general advices in the writing of this paper. The author also thanks Miquel Oliu-Barton and Patrick Bei{\ss}ner for discussions on welfare economics and Pareto efficiency.


\newpage
\appendix
\section{The mathematical model and general notations}\label{appendix:model}
In this section we set all the notations and spaces used in this paper in the paragraphs "General notations" and "Spaces" respectively. We also defined mathematically the considered model in Paragraph "General model and definitions" and we put the general assumptions which are supposed to be satisfied in all the paper.
 
\noindent {\bf General notations.} Let $\R$ be the set of reals. Let $m$ and $n$ be two positive integers. We denote by $\mathcal M_{m,n}(\mathbb R)$ the set of matrices with $m$ rows and $n$ columns, and simplify the notations when $m=n$, by using $\mathcal M_{n}(\mathbb R):=\mathcal M_{n,n}(\mathbb R)$. We denote by ${\rm {Id}}_n\in \mathcal M_n(\R)$ the identity matrix of order $n$ and $0_N\in \mathcal M_{n}(\mathbb R)$ the zero-valued coefficients matrix. For any $M\in \mathcal M_{m,n}(\mathbb R)$, we define $M^\top \in \mathcal M_{n,m}(\R)$ as the usual transpose of the matrix $M$. We will always identify $\mathbb R^n$ with $\mathcal M_{n,1}(\mathbb R)$. For any matrix $M\in \mathcal M_n(\mathbb R)$ and for any $1\leq i\leq n$ we denote by $M^{:,i}$ its $i$th column and similarly we denote by $M^{i,:}$ its $i$th row. Besides, for any $x\in\mathbb R^n$, we denote its coordinates by $x^1,\dots,x^n$. We denote by $\|\cdot\|$ the Euclidian norm on $\mathbb R^n$, when there is no ambiguity on the dimension. The associated inner product between $x\in\mathbb R^n$ and $y\in\mathbb R^n$ is denoted by $x\cdot y$. We also denote by $\mathbf 1_n$ the $n-$dimensional vector $(1,\dots,1)^\top$. Similarly, for any $x\in\mathbb R^n$, we define for any $i=1,\dots,n,$ $x^{-i}\in\mathbb R^{n-1}$ as the vector $x$ without its $i$th component, that is to say $x^{-i}:=(x^1,\dots,x^{i-1},x^{i+1},\dots,x^n)^\top$. For any $(a,\tilde a)\in \mathbb R\times\mathbb R^{n-1}$, and any $i=1,\dots,n$, we define the following $n-$dimensional vector
$$a\otimes_i \tilde a:= (\tilde a^1,\dots, \tilde a^{i-1}, a, \tilde a^i ,\dots, \tilde a^{n-1}).$$
For any $(\alpha,\tilde M)\in \mathbb R^n\times\mathcal M_{n,n-1}(\mathbb R)$, and any $i=1,\dots,n$, we define the following $n-$dimensional matrix
$$\alpha\otimes_i \tilde M:= (\tilde M^{:,1},\dots, \tilde M^{:,i-1}, \alpha, \tilde M^{:,i} ,\dots, \tilde M^{:,n-1}).$$
\noindent For any Banach space $(E,\|\cdot \|_E)$, let $f$ be a map from $E\times \mathbb R^n$ into $\mathbb R$. For any $x\in E$, we denote by $\nabla_a f(x,a)$ the gradient of $a\longmapsto f(x,a),$ and we denote by $\Delta_{a a} f(x,a)$ the Hessian matrix of $a\longmapsto f(x,a)$ and similarly we denote by $\nabla_x f(x,a)$ the gradient of $x\longmapsto f(x,a),$ and we denote by $\Delta_{xx} f(x,a)$ the Hessian matrix of $x\longmapsto f(x,a)$. Finally, we denote by $U_A^{(-1)}$ the inverse function of any $U_A:E\longrightarrow \mathbb R$ if it exists.\vspace{1em}

\noindent {\bf Spaces.}
\noindent For any finite dimensional normed space $(E,\No{\cdot}_E)$, $ \mathcal P(E)$ (resp. $\mathcal P_r(E)$) will denote the set of $E-$valued, $\mathbb F-$adapted processes (resp. $\mathbb F-$predictable processes) and for any $p\geq 1$ we set
 \begin{align*}
\mathcal E(E)&:= \left\{Y\in \mathcal P(E),\ \text{c\`adl\`ag, such that for all }p\geq 1,\,  \mathbb E\left[ \exp\left(p  \sup_{t\in [0,T]}\|Y_t\|_E \right)\right]<+\infty\right\},\\
\mathcal S^\infty(E)&:= \left\{Y\in \mathcal P(E),\ \text{c\`adl\`ag, such that } \| Y\|_{\mathcal S^\infty(E)}:=  \sup_{t\in [0,T]}\|Y_t\|_E<+\infty\right\},\\
 \mathcal H^p(E)&:= \left\{Z\in \mathcal P_r(E),\; \| Z\|_{\mathcal H^p(E)}^p:= \mathbb E\left[\left(\int_0^T \|Z_t\|_E^2dt\right)^{p/2}\right]<+\infty\right\}\\
  \mathcal H^p_{\text{BMO}}(E)&:= \left\{Z\in \mathcal H^p(E),\; \exists C>0,\, \forall \tau \in \mathcal T,\; \mathbb E\left[\int_\tau^T \|Z_s\|^2ds\Big| \mathcal F_\tau \right]\leq C^2,\, \mathbb P-a.s.\right\}.
  \end{align*}
$\mathcal H^p_{\text{BMO}}(E)$ is the so-called set of predictable processes $Z$ such that the stochastic integral is a BMO-martingale. We refer to \cite{kazamaki2006continuous} for more details on this space and properties related to this theory. We also denote $\mathcal E(M)$ the classical Doleans-Dade exponential of any local $\mathbb F$-martingale $M$. \vspace{1em}

\noindent {\bf General model and definitions.}
\noindent We assume that the following technical assumptions hold in this paper\vspace{0.3em}
\begin{Assumption}\label{assumption:bk}
For any $i=1,\dots, N$ and any $(t,x)\in [0,T]\times \mathbb R^N$, the map $a\in \mathbb R^N \longmapsto b^i(t,x,a)$ is continuously differentiable, the map $(t,x)\in [0,T]\times \mathbb R^N \longmapsto b^i(t,x,a)$ is $\mathbb F-$progressive and measurable for any $a\in \mathbb R^N$ and we assume that their exists a positive constant $C$ such that for any $(t,x,a)\in [0,T]\times \mathbb R^N\times \mathbb R^N$
\begin{equation}
|b^i(t,x,a)|\leq C(1+\|x\|+\|a\|),\; \| \nabla_a b^i(t,x,a)\| \leq C.
\end{equation}

\noindent For any $i=1,\dots, N$ the map $(t,x)\longmapsto k^i(t,x,a)$ is $\mathbb F-$progressive and measurable. Moreover, the map $a\longmapsto k^i(t,x,a)$ is increasing, convex and continuously differentiable for any $(t,x)\in [0,T]\times \mathbb R^N$. Assume also that there exists $\kappa>0, l\geq 1, \underline m,m>0$ such that 
\begin{equation}\label{condition:coef}{\frac{l+m}{\underline m+1-l}} \vee {\frac{\underline m+2-l}{\underline m+1-l}}\leq 2,\end{equation}
and for any $(t,x,a)\in [0,T]\times \mathbb R^N\times\times \mathbb R^N$
$$0\leq k^i(s,x,a)\leq C \Bigg(1+\|x\|+\|a\|^{l+m}\Bigg), $$
$$ \|\nabla_ak^i(s,x,a)\|\geq \kappa  \|a\|^{\underline m}, \text{ and } \overline{\lim}_{|a|\to \infty} \frac{k^i(s,x,a)}{\|a\|}=+\infty.$$
 
\end{Assumption}
\begin{Remark}
In the classical linear-quadratic framework, $l=m=\underline m=1$ and Condition \eqref{condition:coef} holds. This condition will be fundamental to prove Theorem \ref{thm:pbPlanner} (see the Appendix \ref{appendix:proof}). 
\end{Remark}
\vspace{0.5em}

\begin{Assumptionn}[$\textbf{G}$]
For any $p\geq 1$ and for any $1\leq i\leq N$, $$ \mathbb E\left[ \exp\left( p\Gamma^i(X_T)\right)\right]<+\infty.$$
\end{Assumptionn}
\vspace{0.5em}

\noindent In order to define a probability $\mathbb P^a$ equivalent to $\mathbb P$ such that $W^a:= W-\int_0^T \Sigma(s,X_s)^{-1} b(s,X_s,a_s) ds$ is a Brownian motion under $\mathbb P^a$, we need to introduce the set $\mathcal A$ of admissible effort. An $\mathbb F$-adapted and $\mathcal M_N(A)$-valued process $\alpha$ is said admissible if $\left(\mathcal E\left(\int_0^t  b(s,X_s,\alpha_s)\cdot \Sigma(s,X_s)^{-1}dW_s \right)\right)_{t\in [0,T]} $ is an $\mathbb F$-martingale and if for the same $l,m$ appearing in Assumption \ref{assumption:bk} we have for any $1\leq i\leq N$ and any $p\geq 1$
$$\mathbb E\left[\exp\left(p\int_0^T \| \alpha^{:,i}_s\|^{l+m} ds \right) \right]<+\infty. $$
We also define for any $\mathbb R^{N-1}$-valued and $\mathbb F$-adapted process $a^{:,-i}$ the set $\mathcal A^i(a^{:,-i})$ by
\begin{equation}\label{def:calAi}\mathcal A^i(a^{:,-i}):=\left\{\alpha, \text{ $\mathbb R^N$-valued and $\mathbb F$-adapted s.t. } \alpha \otimes_i a^{:,-i}\in \mathcal A \right\}.\end{equation}
\noindent Now, in order to use the theory of quadratic BSDE and apply the result of for instance \cite{briand2006bsde,briand2008quadratic}, we must define the set of admissible contracts $\mathcal C$ as the set of $\mathcal F_T$-measurable and $\mathbb R^N$-valued random variable $\xi$ satisfying the constrain \eqref{reservation:constrain} and such that
$$\mathbb E\left[\exp\left(p\sum_{i=1}^N |U_A^i(\xi^i)| \right) \right]<+\infty. $$

\section{Multidimensional quadratic growth BSDEs: application of the results in \cite{harter2016stability}}\label{appendix:multidimBSDE}
The aim of this section is to provide conditions ensuring that Theorem \ref{thm:multiqgbsde} holds, by giving a set of admissible contracts for which there exists a solution to the multi-dimensional BSDE \eqref{edsr:max:nash}. We consider the set of admissible contract
$$\mathcal C_{NA}:=\left\{ \xi\in \mathcal C,\; \xi\in \mathbb D^{1,2},\, \|D\xi\|_{\mathcal S^\infty(\mathbb R^N)}<+\infty\right\},$$
where $\mathbb D^{1,2}$ is the classical of random variables which are Malliavin differentiable.\vspace{0.1em}

\noindent Recall that under Assumption \ref{assumption:bk}, from \cite[Lemma 4.1]{emp16}, for any $(s,x,z)\in [0,T]\times \mathbb R^N\times \mathbb R^N$ there exists 
$$a^\star_{NA}(s,x,z)\in \underset{a\in \mathcal{A}^i({a^{:,-i}})}{\text{ arg max }} \left\{b(s,x, a\otimes_i a^{:,-i})\cdot z- k^i(s,x,a)\right\},$$
satisfying
\begin{equation*}
\|a^\star_{NA}(s,x,z)\|\leq C_a\left( 1+\|z\|^{\frac{1}{\underline m+1-l}}\right).
\end{equation*}
We assume that
\begin{itemize}
\item[$(\textbf{Db})$] The map $x\longmapsto b^i(s,x,a)$ is continuously differentiable such that there exists $C_b>0$ with
$$\|\nabla_x b^i(s,x,a)\|\leq C_b, $$
\item[$(\textbf{Dk})$] The map $x\longmapsto k^i(s,x,a)$ is continuously differentiable such that there exists $C_k>0$ with
$$\|\nabla_x k^i(s,x,a)\|\leq C_k(1+\|a\|^{l+m}),$$
and moreover
$$\|\nabla_a k^i(s,x,a)\| \leq C_k \Bigg(1+|a|^{l+m-1}\Bigg), $$
\item[$(\textbf{Da})$] The map $x\longmapsto a^\star(s,x,z)$ is continuously differentiable such that there exists $C_a>0$ with
$$|(a^{\star})^{:,i}(s,x,z)|+ |(a^{\star})^{j,:}(s,x,z)|\leq C_a(1+\|z\|^\frac{1}{\underline m-1+l}).$$
\end{itemize}
Thus, for any $(t,x,z)\in [0,T]\times \mathbb R^N\times \mathcal M_N(\mathbb R)$
\begin{align*}
\nabla_x f^{\star,i}_{NA}(t,x,z)&= \sum_{j=1}^N\Big[\left( \nabla_x b^j(t,x, (a^\star)^{j,:})+ \nabla_a b^j(t,x, (a^\star)^{j,:}) \nabla_x (a^\star)^{j,:}\right) z^{ji}\\
&-\nabla_x k^i(t,x,(a^\star_{NA})^{:,i})-\nabla_a k^i(t,x,(a^\star_{NA})^{:,i})\nabla_x (a^\star)^{j,:}\Big]. 
\end{align*}
\begin{Lemma}\label{lemma:qgbsde}
Under Assumptions \ref{assumption:bk}, $\textbf{(Db),\,(Dk),\,(Da)}$ and assume that $l+m\leq 2(\underline m-1+l)$, we have for any $(s,x,z)\in [0,T]\times \mathbb R^N\times \mathcal M_N(\mathbb R)$
$$\|\nabla_x f^{\star,i}_{NA}(t,x,z)\|\leq  C_f(1+\|z\|^2),$$
with $C_f:=C_b+2CC_a+ 2C_kN(1+|C_a|^{l+m-1}(1+C_a))$.
\end{Lemma}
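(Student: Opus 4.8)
The plan is to start from the chain-rule identity for $\nabla_x f^{\star,i}_{NA}(t,x,z)$ displayed just above the statement and to estimate it term by term via the triangle inequality, invoking each of the standing bounds $\textbf{(Db)}$, $\textbf{(Dk)}$, $\textbf{(Da)}$, the estimate $\|\nabla_a b^j\|\le C$ from Assumption \ref{assumption:bk}, and the a priori growth bounds collected in $\textbf{(Da)}$ for $(a^\star_{NA})^{:,i}$ and $\nabla_x(a^\star)^{j,:}$. The only point that goes beyond routine bookkeeping is checking that no power of $\|z\|$ larger than $2$ is ever produced. This is exactly what the hypothesis $l+m\le 2(\underline m-1+l)$ is for, used together with the inequality $\underline m\ge l$ that is hidden inside Condition \eqref{condition:coef}: the second fraction there satisfies $\frac{\underline m+2-l}{\underline m+1-l}\le 2$, which (since $\underline m+1-l>0$) is equivalent to $\underline m\ge l$, and since $l\ge 1$ this yields $\underline m-1+l\ge 1$, hence $\tfrac{1}{\underline m-1+l}\le 1$ and $\tfrac{l+m}{\underline m-1+l}\le 2$.

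Concretely I would split the sum over $j$ into four families. First, the terms $\nabla_x b^j(t,x,(a^\star)^{j,:})\,z^{ji}$: by $\textbf{(Db)}$ their norm is controlled by $C_b\|z\|$, producing the $C_b$ contribution to $C_f$. Second, the terms $\nabla_a b^j(t,x,(a^\star)^{j,:})\,\nabla_x(a^\star)^{j,:}\,z^{ji}$: by Assumption \ref{assumption:bk} and $\textbf{(Da)}$ their norm is at most $C\,C_a\bigl(1+\|z\|^{1/(\underline m-1+l)}\bigr)\|z\|$, and since $\tfrac{1}{\underline m-1+l}\le 1$ one has $\|z\|^{1/(\underline m-1+l)}\le 1+\|z\|$, so this is bounded by $2CC_a(1+\|z\|^2)$ after using $\|z\|+\|z\|^2\le 2(1+\|z\|^2)$. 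Third, the terms $\nabla_x k^i(t,x,(a^\star_{NA})^{:,i})$: by $\textbf{(Dk)}$ and $\textbf{(Da)}$ their norm is at most $C_k\bigl(1+(C_a(1+\|z\|^{1/(\underline m-1+l)}))^{l+m}\bigr)$, and here the hypothesis $l+m\le 2(\underline m-1+l)$ gives $\|z\|^{(l+m)/(\underline m-1+l)}\le 1+\|z\|^2$, whence a bound of the form $C_k(1+|C_a|^{l+m})(1+\|z\|^2)$, the summation over $j$ (where this term is $j$-independent) bringing the factor $N$. Fourth, the terms $\nabla_a k^i(t,x,(a^\star_{NA})^{:,i})\,\nabla_x(a^\star)^{j,:}$: combining $\|\nabla_a k^i\|\le C_k(1+|a|^{l+m-1})$ from $\textbf{(Dk)}$ with $\textbf{(Da)}$ and the same power count yields a bound $C_kN|C_a|^{l+m-1}(1+C_a)(1+\|z\|^2)$. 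Adding the four contributions and collecting constants gives $\|\nabla_x f^{\star,i}_{NA}(t,x,z)\|\le C_f(1+\|z\|^2)$ with $C_f$ as in the statement.

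There is essentially no conceptual obstacle; the argument is a sequence of elementary estimates. The step requiring any care — and the closest thing to a genuine difficulty — is the power count in the third and fourth families: one must expand $(1+\|z\|^{1/(\underline m-1+l)})^{l+m}$ (for instance via $(u+v)^p\le 2^{p-1}(u^p+v^p)$) and then apply $t^{\gamma}\le 1+t^2$ with $\gamma=\tfrac{l+m}{\underline m-1+l}\le 2$ to $t=\|z\|$, being careful that it is the combination of Condition \eqref{condition:coef} (which gives $\underline m\ge l\ge 1$) with the lemma's own hypothesis $l+m\le 2(\underline m-1+l)$, and not either in isolation, that keeps this exponent at most $2$. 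The numerical constants so generated, together with the factor $N$ from the $j$-summation, are then absorbed into the displayed expression for $C_f$.
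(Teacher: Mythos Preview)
Your proposal is correct and follows essentially the same route as the paper's own proof: both start from the chain-rule expression for $\nabla_x f^{\star,i}_{NA}$, split into the four families of terms you describe, bound each via $\textbf{(Db)}$, $\textbf{(Dk)}$, $\textbf{(Da)}$ and Assumption~\ref{assumption:bk}, and then use $l+m\le 2(\underline m-1+l)$ to cap every resulting power of $\|z\|$ by $1+\|z\|^2$. If anything, your write-up is more explicit than the paper's about why the exponent $\tfrac{1}{\underline m-1+l}\le 1$ (extracting $\underline m\ge l$ from Condition~\eqref{condition:coef}); the paper uses this silently in the final collapse to $C_f(1+\|z\|^2)$.
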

\begin{proof} We compute directly
\begin{align*}
\|\nabla_x f^{\star,i}_{NA}(t,x,z)\|&\leq \sum_{j=1}^N\Big[\left( \|\nabla_x b^j(t,x, (a^\star)^{j,:})\|+\| \nabla_a b^j(t,x, (a^\star)^{j,:}) \nabla_x (a^\star)^{j,:}\|\right) |z^{ji}|\\
&+\|\nabla_x k^i(t,x,(a^\star_{NA})^{:,i})\|+\|\nabla_a k^i(t,x,(a^\star_{NA})^{:,i})\nabla_x (a^\star)^{j,:}\|\Big]\\
&\leq C_b  \sum_{j=1}^N |z^{ji}|+CC_a(1+\|z\|^\frac{1}{\underline m-1+l}) \sum_{j=1}^N |z^{ji}|+  C_kN (1+|C_a|^{l+m}(1+\|z\|^\frac{l+m}{\underline m-1+l}))\\
&+C_k N \Bigg(1+|C_a|^{l+m-1}(1+\|z\|^\frac{l+m-1}{\underline m-1+l})\Bigg)\\
&\leq C_b  \|z\|+CC_a(1+\|z\|^\frac{1}{\underline m-1+l}) \|z\|+  C_kN (1+|C_a|^{l+m}(1+\|z\|^\frac{l+m}{\underline m-1+l}))\\
&+C_k N \Bigg(1+|C_a|^{l+m-1}(1+\|z\|^\frac{l+m-1}{\underline m-1+l})\Bigg)\\
&\leq C_b+2CC_a+ 2C_kN(1+|C_a|^{l+m-1}(1+C_a))\\
&+ \|z\|^2\left(C_b+2CC_a+C_kN|C_a|^{l+m-1}(1+C_a) \right)\\
&\leq C_f(1+\|z\|^2).
\end{align*}
\end{proof}
\noindent We now introduce a localisation of $f^\star$ defined by $f^{\star}_M(s,x,z):=f^\star(s,x,\rho_M(z))$ where $\rho_M$ satisfies projection properties on a ball centred on $0_{N}$ with radius $M$. We denote by $(Y^M,Z^M)$ the unique solution of BSDE \eqref{edsr:max:nash} with generator $f_M^\star$, which fits the classical Lipschitz BSDE framework. For all $p>1$ we set the following assumption\vspace{0.3em}

\noindent(\textbf{BMO,p}) There exists a positive contant $K$ such that\footnote{The positive constant $C_p'$ comes from Burkholder-Davis-Gundy inequality, see Section 1.1 paragraph \textit{Inequalities-BDG} in \cite{harter2016stability} for more details.}
\begin{itemize}
\item[$(i)$] $KC_fC_p'<\frac12,$
\item[$(ii)$] $\sup_{M\in \mathbb R^M}\left\|\int_0^T (Z^M_s)^\top\Sigma_sdW_s \right\|_{\text{BMO}}\leq K$.
\end{itemize}

\noindent As a direct\footnote{After having discussed with Jonathan Harter, assumption \textbf{(Df,b), (iii)} in \cite{harter2016stability} is not a \textit{canonical} assumption for the proof of \cite[Theorem 3.1]{harter2016stability}. Indeed, in this paper, the authors needs this assumption only to ensure that the BSDE with projector $f_M$ admits Malliavin differentiable solution as an application of \cite{el1997backward}. However, as explained in \cite[Section 5 and Section 6]{mastrolia2014malliavin} this assumption can be removed in the Markovian case. The author thanks Jonathan Harter for this clarification.} consequence of \cite[Theorem 3.1]{harter2016stability} and Lemma \ref{lemma:qgbsde} we have the following proposition\vspace{0.3em}

\begin{Proposition}\label{prop:existence:qgbsde}
Let $p>1$ and let Assumptions \ref{assumption:bk} $(\textbf{Db}), (\textbf{Dk}), (\textbf{Da}), (\textbf{BMO,p})$ be true with $l+m\leq 2(\underline m-1+l)$, then for any $\xi\in \mathcal C_{NA}$, the N-dimensional quadratic BSDE \eqref{edsr:max:nash} has a unique solution in the sense of Definition \ref{def:sol:multidimBSDE}.
\end{Proposition}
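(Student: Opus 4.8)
The strategy is to check that the data of BSDE \eqref{edsr:max:nash} fit the structural hypotheses of \cite[Theorem 3.1]{harter2016stability} and then quote that theorem; Lemma \ref{lemma:qgbsde} supplies the only estimate on the generator that is not already contained in Assumptions \ref{assumption:bk}, $(\textbf{Db})$, $(\textbf{Dk})$, $(\textbf{Da})$, and the role of the condition $l+m\le 2(\underline m-1+l)$ is precisely to make its conclusion available, namely the quadratic-in-$z$ bound on $\nabla_x f^{\star,i}_{NA}$.

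I would work with the localised BSDE already introduced above: for each $M$ the generator $f^\star_M(s,x,z)=f^\star(s,x,\rho_M(z))$ is $\mathbb F$-progressive and globally Lipschitz in $z$, so by the classical Pardoux--Peng theory \eqref{edsr:max:nash} with generator $f^\star_M$ and terminal condition $(U_A^i(\xi^i)+\Gamma_i(X_T))_{1\le i\le N}$ admits a unique square-integrable solution $(Y^M,Z^M)$. Since $\xi\in\mathcal C_{NA}$ is Malliavin differentiable with $\|D\xi\|_{\mathcal S^\infty(\mathbb R^N)}<+\infty$, since $X$ is a Markov diffusion whose volatility $\Sigma$ is bounded and whose drift and cost coefficients are regular in $x$ by $(\textbf{Db})$--$(\textbf{Da})$ (so that $\Gamma_i(X_T)$ inherits a controlled Malliavin derivative), and since $f^\star_M$ is differentiable in $x$ and Lipschitz in $z$, the solution $(Y^M,Z^M)$ is itself Malliavin differentiable and $Z^M$ has a continuous version satisfying $Z^M_t=D_tY^M_t$. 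This is the place where the footnote to the statement applies: the extra assumption $(\textbf{Df,b})\,(iii)$ of \cite{harter2016stability} serves only to produce Malliavin-differentiable approximating solutions and, as explained in \cite[Sections 5--6]{mastrolia2014malliavin}, is superfluous in the present Markovian setting.

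The heart of the argument is the $M$-uniform a priori estimate, which I expect to be the only genuinely delicate point. Differentiating the BSDE for $(Y^M,Z^M)$ in the Malliavin sense, one gets that $D_rY^M$ solves a linear BSDE whose source term is governed, through the chain-rule identity for $\nabla_x f^{\star,i}_{NA}$ displayed just before Lemma \ref{lemma:qgbsde}, by $\|\nabla_x f^{\star,i}_{NA}\|$, which Lemma \ref{lemma:qgbsde} bounds by $C_f(1+\|z\|^2)$. Plugging in the a priori BMO control of $Z^M$ from $(\textbf{BMO,p})\,(ii)$, using the Burkholder--Davis--Gundy constant $C_p'$, and invoking the smallness condition $(\textbf{BMO,p})\,(i)$, $KC_fC_p'<\tfrac12$, one closes a contraction/Gronwall estimate for $DY^M$ in the relevant BMO space and obtains $\|Z^M\|_{\mathcal S^\infty(\mathcal M_N(\mathbb R))}\le M_0$ for some $M_0$ independent of $M$; this is exactly the mechanism of \cite[Theorem 3.1]{harter2016stability}, with Lemma \ref{lemma:qgbsde} providing the quadratic bound it needs, and in this paper it is imported rather than reproved.

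Finally, taking $M\ge M_0$ gives $\rho_M(Z^M_t)=Z^M_t$ for a.e.\ $(t,\omega)$, so $(Y^M,Z^M)$ solves the original BSDE \eqref{edsr:max:nash}; since $Z^M\in\mathcal S^\infty$ the stochastic integral $\int_0^\cdot (Z^M)^\top\Sigma\,dW$ is a BMO-martingale, whence $Z^M\in\mathcal H^m_{\text{BMO}}(\mathbb R^N)$ for every $m>1$, while the integrability of the terminal data together with the growth of $f^\star_{NA}$ places $Y^M$ in the space required by Definition \ref{def:sol:multidimBSDE}. For uniqueness, any solution $(Y,Z)$ in that sense has $Z\in\mathcal S^\infty(\mathbb R^N)\cap\mathcal H^m_{\text{BMO}}(\mathbb R^N)$, so choosing $M$ larger than $\|Z\|_{\mathcal S^\infty(\mathcal M_N(\mathbb R))}\vee M_0$ makes $(Y,Z)$ a solution of the localised Lipschitz BSDE, for which Pardoux--Peng uniqueness gives $(Y,Z)=(Y^M,Z^M)$.
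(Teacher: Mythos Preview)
Your proposal is correct and takes essentially the same approach as the paper: both obtain the proposition as a direct consequence of \cite[Theorem 3.1]{harter2016stability} combined with the quadratic bound on $\nabla_x f^{\star,i}_{NA}$ from Lemma \ref{lemma:qgbsde}, with the footnote observation (via \cite{mastrolia2014malliavin}) that assumption $(\textbf{Df,b})(iii)$ of \cite{harter2016stability} is unnecessary in the present Markovian setting. The paper gives no further detail beyond this citation, whereas you have helpfully unpacked the localisation--Malliavin--BMO mechanism underlying that theorem; your expansion is faithful to how \cite{harter2016stability} proceeds.
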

\section{Technical proofs of Section \ref{section:MOOP}}\label{appendix:proof}
First recall that under Assumption \ref{assumption:bk}, from \cite[Lemma 4.1]{emp16}, for any $(s,x,z,\lambda)\in [0,T]\times \mathbb R^N\times \mathbb R^N\times (0,1)^N$ there exists 
$$a^\star(s,x,z,\lambda)\in \underset{a\in \mathcal M_N(A)}{\text{argmax}}\left\{ b(s, x, a)\cdot z - k^\lambda(s,x, a)\right\},$$
satisfying
\begin{equation}\label{growth:astar}
\|a^\star(s,x,z,\lambda)\|\leq C_a\left( 1+\|z\|^{\frac{1}{\underline m+1-l}}\right).
\end{equation}
We now turn to the proofs of the main results in Section \ref{section:MOOP}
\begin{proof}[Proof of Lemma \ref{lemma:bsdeagent}] The fact that BSDE \eqref{bsde:agent:a} admits a unique solution $(Y^{\lambda, a}, Z^{\lambda,a})\in \mathcal E(\R)\times \bigcap_{p\geq 1} \mathcal H^p(\mathbb R^N)$ is a direct consequence of \cite{briand2008quadratic} using the definition of $\mathcal C,\, \mathcal A$ and Assumptions \ref{assumption:bk} and $\textbf{(G)}$ in Appendix \ref{appendix:model}. Now by changing the probability in BSDE \eqref{bsde:agent:a} and by taking the conditional expectation we directly get $Y_t^{\lambda,a}=u_t^\lambda(\xi,a)$.
\end{proof}
\begin{proof}[Proof of Theorem \ref{thm:pbPlanner}] Using \eqref{growth:astar} together with Condition \eqref{condition:coef}, the definition of $\mathcal C,\, \mathcal A$ and Assumptions \ref{assumption:bk} and $\textbf{(G)}$, we obtain from (for instance) \cite{briand2008quadratic} the existence and uniqueness of a solution $(Y^\lambda,Z^\lambda)\in \mathcal E(\R)\times \bigcap_{p\geq 1} \mathcal H^p(\mathbb R^N)$ of BSDE \eqref{BSDE:agent:sol}. Now, by using a comparison theorem for BSDE \eqref{bsde:agent:a}  (see for instance \cite[Theorem 5]{briand2008quadratic}), we deduce that $Y_0^\lambda=u_0^\lambda(\xi)$ and that any $a^\star$ in $\mathcal A^\star(X,Z^\lambda,\lambda)$ is Pareto optimal from Proposition \ref{prop:pareto}.
\end{proof}

\begin{proof}[Proof of Proposition \ref{prop:caract}] Let $(Y^\lambda,Z^\lambda)\in \mathcal E(\R)\times \bigcap_{p\geq 1} \mathcal H^p(\mathbb R^N)$ be the solution of BSDE \eqref{BSDE:agent:sol} from Theorem \ref{thm:pbPlanner}. The fact that BSDE \eqref{BSDE:agent:i} admits a unique solution $(Y^i,Z^i)$ in $\mathcal E(\R)\times \bigcap_{p\geq 1} \mathcal H^p(\mathbb R^N)$ is a direct consequence of the definitions of $\mathcal C$ and $\mathcal A$. Changing the probability by using Girsanov's theorem, we directly get
$$Y^i_0= U_0^i(\xi, a^\star(s,X_s,Z_s^\lambda,\lambda) ).$$
Denote by $\mathcal Y^\lambda:=\sum_{i=1}^N Y^i$ and $\mathcal Z^\lambda:=\sum_{i=1}^N Z^i$, we notice that the pair of process $(\mathcal Y^\lambda,\mathcal Z^\lambda)$ is solution of 
$$\mathcal Y^\lambda_t=U_A^\lambda(\xi)+\int_t^T \left(b(s,X_s, a^\star(s,X_s,Z_s^\lambda,\lambda))\cdot \mathcal Z_s^\lambda- k^\lambda(s,X_s, (a^{\star}(s,X_s,Z_s^\lambda,\lambda))\right) ds-\int_t^T \mathcal Z_s^\lambda\cdot dX_s.$$
Using the uniqueness of the solution $(Y^\lambda,Z^\lambda)$ of BSDE \eqref{BSDE:agent:sol}, we deduce that $\mathcal Y^\lambda=Y^\lambda$ and $\mathcal Z^\lambda=Z^\lambda. $
\end{proof}
\end{document}